\ProvideTextCommandDefault{\cprime}{\tprime}
 \newlength{\baseunit}               
\newtheorem{tm}{Theorem}
\newtheorem{pr}[tm]{Proposition}
\newtheorem{lm}[tm]{Lemma}
\newtheorem{co}[tm]{Corollary}
\newtheorem{df}[tm]{Definition}
\newtheorem{rmk}[tm]{Remark}
\newcommand{\GW}{\mathrm{GW}}
\newcommand{\W}{\mathrm{W}}
\newcommand{\rH}{\operatorname{H}}
\newcommand{\Z}{\mathbb{Z}}
\newcommand{\R}{\mathbb{R}}
\newcommand{\C}{\mathbb{C}}
\newcommand{\ZZ}{\Z}
\newcommand{\Hom}{\operatorname{Hom}} 
\newcommand{\End}{\operatorname{End}} 
\newcommand{\rank}{\operatorname{Rank}}
\newcommand\bbb[1]{\ensuremath{{\mathbf{#1}}}}
\newcommand{\Tr}{\operatorname{Tr}}
\newcommand{\ind}{\operatorname{ind}}
\newcommand{\MW}{\operatorname{MW}}
\newcommand{\PGL}{\operatorname{PGL}}
\newcommand{\GL}{\operatorname{GL}}
\newcommand{\Gr}{\operatorname{Gr}}
\newcommand{\Ker}{\operatorname{Ker}}
\newcommand{\sgn}{\operatorname{Sign}}
\DeclareMathOperator{\Res}{Res}
\newcommand{\ksep}{{k^{\operatorname{sep}}}}
\newcommand{\Proj}{\operatorname{Proj}}
\newcommand{\hidden}[1]{\footnote{Hidden:  #1}}
\renewcommand{\hidden}[1]{}
\newcommand{\bbP}{\mathbf{P}}
\newcommand{\calE}{{ \mathcal E}}
\newcommand{\calO}{{ \mathcal O}}
\newcommand{\calQ}{{ \mathcal Q}}
\newcommand{\calS}{{ \mathcal S}}
\newcommand{\calV}{{ \mathcal V}}
\newcommand{\Spec}{\operatorname{Spec}}
\newcommand{\Span}{\operatorname{Span}}
\newcommand{\ShHom}{\mathscr{H}\kern -.5pt om}
\newcommand{\Disc}{\operatorname{Disc}}
\DeclareMathOperator{\codim}{codim}
\let\@wraptoccontribs\wraptoccontribs
\begin{document}
\pagestyle{plain}
\title{An arithmetic count of the lines meeting four lines in $\bbb{P}^3$}
\contrib[with an appendix by]{Borys Kadets, Padmavathi Srinivasan, Ashvin A.~Swaminathan, Libby Taylor, and Dennis Tseng}

\author{Padmavathi Srinivasan}
\address{Current: P.~Srinivasan, School of Mathematics, University of Georgia, 452 Boyd Graduate Studies, 1023 D. W. Brooks Drive, Athens, GA 30602.}
\email{Padmavathi.Srinivasan@uga.edu}
\urladdr{https://padmask.github.io/}

\author{Kirsten Wickelgren}
\address{Current: K.~Wickelgren, Department of Mathematics, Duke University, 120 Science Drive, Room 117 Physics, Box 90320, Durham, NC 27708-0320.}
\email{kirsten.wickelgren@duke.edu}
\urladdr{https://services.math.duke.edu/~kgw/}

\subjclass[2010]{Primary 14N15, 14F42; Secondary 55M25, 14G27.}
\date{\today}
\begin{abstract}
We enrich the classical count that there are two complex lines meeting four lines in space to an equality of isomorphism classes of bilinear forms. For any field $k$, this enrichment counts the number of lines meeting four lines defined over $k$ in $\bbb{P}^3_k$, with such lines weighted by their fields of definition together with information about the cross-ratio of the intersection points and spanning planes. We generalize this example to an infinite family of such enrichments, obtained using an Euler number in $\bbb{A}^1$-homotopy theory. The classical counts are recovered by taking the rank of the bilinear forms.		
\end{abstract}
\maketitle


{\parskip=12pt 

\section{Introduction}

It is a classical result that there are exactly two lines meeting four general lines in $\bbb{P}^3_{\bbb{C}}$, and we briefly recall a proof. The lines meeting three of the four are pairwise disjoint and their union is a degree $2$ hypersurface. The intersection of the fourth line with the hypersurface is then two points, one on each of the two lines meeting all four. See \cite[3.4.1]{EisenbudHarris} for a more detailed description of this problem. If all four lines are now defined over an arbitrary field $k$, the associated degree $2$ hypersurface is also defined over $k$, but the two intersection points, and therefore the two lines, may have coefficients in some quadratic extension $k[\sqrt{L}]$ of $k$. For example, over the real numbers $\bbb{R}$, there may be two real lines or a Galois-conjugate pair of $\bbb{C}$-lines. 

In this paper, we give a restriction on the field of definition of the two lines combined with other arithmetic-geometric information on the configurations of the lines. More generally, we give an analogous restriction on the lines meeting $2n-2$ codimension $2$ hyperplanes in $\bbb{P}^n$ with $n$ odd. In the appendix, the condition that each of the hyperplanes is \textit{individually} defined over $k$ is relaxed to the condition that their \textit{union} is defined over $k$, or more precisely, that the lines form an \'etale $k$-algebra. In other words, the field of definition of any line is separable over $k$, and the Galois group of the maximal separable extension of $k$ is now allowed to permute the $2n-2$ hyperplanes.

These restrictions are equalities in the Grothendieck--Witt group $\GW(k)$ of the ground field $k$, defined to be the group completion of the semi-ring of isomorphism classes of non-degenerate, symmetric bilinear forms on finite dimensional vector spaces valued in $k$. The Grothendieck--Witt group arises in this context as the target of Morel's degree homomorphism in $\bbb{A}^1$-homotopy theory. A feature of $\bbb{A}^1$-homotopy theory is that it produces results over any field. These results can record arithmetic-geometric information about enumerative problems, classically posed over the complex numbers, which admit solutions using algebraic topology.  We show this is the case for the two enumerative problems just described and answer the question of what information is being recorded.

We now describe our results in more detail. Given a line $L$ meeting four pairwise non-intersecting lines $L_1$,$L_2$,$L_3$, and $L_4$, there are four distinguished points $L \cap L_1$, $L \cap L_2$, $L \cap L_3$ and $L \cap L_4$ on $L$. If $k(L)$ denotes the field of definition of $L$, then $L \cong \bbb{P}^1_{k(L)}$, and the four distinguished points on $L$ have an associated cross-ratio $\lambda_L \in k(L)$. The planes in $\bbb{P}^3_k$ containing $L$ are likewise parametrized by a scheme isomorphic to $\bbb{P}^1_{k(L)}$. This $\bbb{P}^1_{k(L)}$ also has four distinguished points on it, coming from the span of $L$ and each of the $L_i$, and hence an associated cross-ratio $\mu_L$.

Let $\langle a \rangle$ in $\GW(k)$ denote the stable isomorphism class of the one-dimensional bilinear form $k \times k \to k$ defined $(x,y) \mapsto a xy$, for $a$ in $k^*/(k^*)^2$. Since non-degenerate symmetric bilinear forms over fields may be stably diagonalized, it follows that $\langle a \rangle$ for $a \in k^*$ generate the group $\GW(k)$. For a separable field extension $k \subseteq E$, let $\Tr_{E/k}: \GW(E) \to \GW(k)$ denote the map which takes a bilinear form $\beta: V \times V \to E$ to the composition $\Tr_{E/k} \circ \beta: V \times V \to k$ of $\beta$ with the field trace $\Tr_{E/k}: E \to k$. Then the following equality holds in $\GW(k)$.

\begin{tm}\label{intro:4linesthm}
Let $k$ be a field of characteristic not $2$, and let $L_1$, $L_2$, $L_3$, and $L_4$ be general lines in $\bbb{P}^3_k$ defined over $k$. Then $$\sum_{\substack{\text{lines}~L~\text{such that } L \cap L_i \neq \emptyset \\ \text{for  } i=1,2,3,4}} \Tr_{k(L)/k} \langle \lambda_L - \mu_L \rangle = \langle 1 \rangle + \langle -1 \rangle.$$
\end{tm}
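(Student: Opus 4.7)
My plan is to interpret the enumerative problem as the vanishing of a section of a rank-$4$ vector bundle on the $4$-dimensional Grassmannian $X := \Gr(2,4)$ of lines in $\bbP^3_k$, and then to compute its $\bbA^1$-Euler number in $\GW(k)$ as a sum of local indices. Write $S$ for the tautological rank-$2$ subbundle and $Q := \cO_X^{4}/S$. Each line $L_i \subset \bbP^3_k$ corresponds to a $2$-plane $V_i \subset k^{4}$, and the condition ``$L \cap L_i \neq \emptyset$'' is the vanishing of the Pl\"ucker section of $\det S^\vee$ determined by $\Lambda^2 V_i \in \Lambda^2 k^{4}$. Assembling the four conditions yields a section $\sigma$ of the rank-$4$ bundle $\calE := (\det S^\vee)^{\oplus 4}$ whose zero locus is precisely the set of lines meeting all four $L_i$.

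To deploy the $\bbA^1$-Euler number formalism, I would first check relative orientability of $\calE$. Using $T_X \cong S^\vee \otimes Q$ together with $\det S \otimes \det Q \cong \cO_X$, one computes $\det T_X \cong (\det S^\vee)^{\otimes 4} \cong \det \calE$, so $\det \calE \otimes \det T_X^\vee$ is canonically trivial, hence a square. Under this relative orientation, and assuming the $L_i$ are generic enough that $\sigma$ is transverse, the Euler number is well-defined in $\GW(k)$ and admits the local decomposition $n(\calE) = \sum_{L \in \sigma^{-1}(0)} \Tr_{k(L)/k}(\mathrm{ind}_L \sigma)$.

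The heart of the proof is the local computation $\mathrm{ind}_L \sigma = \langle \lambda_L - \mu_L \rangle \in \GW(k(L))$. For each zero $L$ I would choose a splitting $k^{4} = U \oplus W$ with $U$ the $2$-plane for $L$, giving an affine chart on $X$ near $L$ by graphs of linear maps $U \to W$, and hence four coordinates $x_1, x_2, x_3, x_4$. In these coordinates each Pl\"ucker equation $f_i = 0$ can be written down explicitly, and since the zero is simple, the local index is the class in $\GW(k(L))$ of $\det(\partial f_i / \partial x_j)|_L$. The rows of this Jacobian naturally factor through the two $\bbP^1$'s attached to $L$: the line $L$ itself, parametrizing the intersection points $L \cap L_i$, and the pencil of planes through $L$, parametrizing the spanning planes $\langle L, L_i\rangle$. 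The key geometric identity, and the step I expect to be the main obstacle, is that this Jacobian determinant equals, modulo $(k(L)^\times)^2$, the difference $\lambda_L - \mu_L$ of the two cross-ratios. I expect this to reduce to a classical cross-ratio identity expressing $\lambda - \mu$ as a single $2 \times 2$ determinant built from homogeneous coordinates for four points on $\bbP^1$.

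Finally, to identify $n(\calE)$ globally as $\langle 1 \rangle + \langle -1 \rangle$, I would exhibit a specific configuration of four lines, for instance over $\bbQ$ or $\bbR$, whose two common transversals are defined over the base field with easily computable cross-ratios; computing the two local contributions pins down $n(\calE)$, and base-change invariance of the Euler number over $\Spec \Z[1/2]$ propagates the result to arbitrary fields of characteristic not $2$. A natural symmetric choice, such as four mutually skew lines admitting an involution swapping pairs of transversals, should produce local contributions $\langle 1 \rangle$ and $\langle -1 \rangle$, summing to the hyperbolic form on the right-hand side.
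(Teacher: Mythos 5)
Your setup and overall strategy match the paper's exactly: the bundle $\calE = (\Lambda^2 \calS^*)^{\oplus 4}$ on $\Gr(2,4)$, its relative orientability, the decomposition of the Euler number of \cite{CubicSurface} as a sum of local $\GW(k)$-indices, and the identification of each local index with $\langle\lambda_L-\mu_L\rangle$. However, you correctly isolate the crucial step---showing that the Jacobian determinant at a zero $L$ equals $\lambda_L-\mu_L$ modulo squares---and then leave it as an acknowledged gap, with only a vague appeal to a ``classical cross-ratio identity.'' This step is genuinely nontrivial and is the content of Theorem~\ref{thmcrossratio} in the paper: after normalizing the section so that $a_{i3}=a_{i4}=b_{i1}=b_{i2}=0$, one chooses $A,B\in\GL_2(k(L))$ so that three of the four intersection points $L\cap L_i$ are carried to $[1:0],[1:1],[0:1]$ (making the fourth $[1:\lambda_L]$) and similarly for the four spanning planes, and then the $4\times 4$ Jacobian determinant of Proposition~\ref{indexformula} evaluates explicitly to a scalar multiple of $\lambda_L-\mu_L$.

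Two further points deserve attention. First, that Jacobian determinant depends on the choice of defining linear forms $\alpha_i,\beta_i$ for the $L_i$ and on the lifts $A,B$ of the $\PGL_2$-normalizations, so it only matches $\lambda_L-\mu_L$ up to a nonzero constant $c\in k^\times$; your proposal treats the Jacobian as if it were canonically the cross-ratio difference, but the paper must rescale the section $\sigma$ by $1/c$ (harmless by Corollary~\ref{e-well-defined}) to kill this ambiguity. Second, your closing step of hunting for a symmetric configuration where the two local indices are literally $\langle 1\rangle$ and $\langle -1\rangle$ is stronger than needed: Proposition~\ref{explicitEulernumber} shows that for \emph{any} transverse $k$-rational configuration, after normalizing the $\alpha_i,\beta_i$, the two indices at the transversals $L,L'$ satisfy $\ind_{L'}\sigma = \langle -1\rangle\,\ind_L\sigma$ because the two Jacobian matrices are related by a row swap and two sign changes, so $\langle c\rangle+\langle -c\rangle$ is hyperbolic automatically without having to arrange $c=1$.
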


The condition that the lines are general means there is an open set of four-tuples of lines such that the theorem holds. In this case, this open set contains the lines that are pairwise non-intersecting and such that the fourth is not tangent to the quadric of lines meeting the first three. Since there are $2$ lines over the algebraic closure, the condition that the characteristic of $k$ is not $2$ guarantees that $k \subseteq k(L)$ is separable, allowing for the description of $\Tr_{k(L)/k} $ given above. 

Theorem~\ref{intro:4linesthm} generalizes as follows. Let $\pi_1,\pi_2, \ldots, \pi_{2n-2}$ be general codimension two hyperplanes defined over $k$ in $\bbb{P}^n_k$ for $n$ odd.  Suppose that $L$ is a line in $\bbb{P}^n$, defined over the field $k(L)$, which intersects all of the $\pi_i$. 

\begin{tm}\label{intro:2n-2codim2hyperplanesthm}
Let $k$ be a field. Let $n$ be an odd natural number and let $\pi_1,\pi_2, \ldots, \pi_{2n-2}$ be general codimension two hyperplanes defined over $k$ in $\bbb{P}^n_k$. Assume either that $k$ is perfect, or that the extension $k \subseteq k(L)$ is separable for every line $L$ that meets all the planes $\pi_i$. Let $i(L) \in k(L)^*/(k(L)^*)^2$ be the index of the line $L$ defined by Equation~\ref{i(L)def}. (It records information about $k(L)$, the intersection points $\pi_i \cap L$, and the codimension $1$ hyperplanes spanned by $\pi_i$ and $L$.) Then $$\sum_{\substack{\text{lines}~L~\text{such that } L \cap \pi_i \neq \emptyset \\ \text{for  } i=1\ldots 2n-2}} \Tr_{k(L)/k} \langle i(L) \rangle =  \frac{1}{2} \frac{(2n-2)!}{n! (n-1)!}(\langle 1 \rangle + \langle -1 \rangle).$$
\end{tm}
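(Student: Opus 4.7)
The plan is to realize the enumerative count as an $\mathbb{A}^1$-Euler number of a relatively orientable vector bundle on the Grassmannian $X := \Gr(2,n+1)$ of lines in $\bbb{P}^n$, and then carry out two computations: a local one at each zero of an explicit section, giving the left-hand side, and a global Euler class computation, giving the right-hand side.

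On $X$ let $\mathcal{S}$ denote the rank-$2$ tautological subbundle of $\underline{k^{n+1}}$. A codimension-two hyperplane $\pi \subset \bbb{P}^n_k$, corresponding to an $(n-1)$-dimensional subspace $V \subset k^{n+1}$, produces the composition $\mathcal{S}\hookrightarrow\underline{k^{n+1}}\twoheadrightarrow\underline{k^{n+1}/V}$ between rank-$2$ bundles. Its determinant, after choosing a volume form on $k^{n+1}/V$, is a section $\sigma_\pi$ of the Plücker line bundle $\det(\mathcal{S}^*)$ whose vanishing locus is exactly the Schubert cycle of lines meeting $\pi$. Assembling the $2n-2$ hyperplanes gives a section $\sigma = (\sigma_{\pi_1},\dots,\sigma_{\pi_{2n-2}})$ of the rank-$(2n-2)$ bundle $E := \det(\mathcal{S}^*)^{\oplus 2n-2}$. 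Because $\dim X = 2(n-1) = \rk E$ and
$$
\det E \otimes (\det T_X)^{-1} = \det(\mathcal{S}^*)^{2n-2}\otimes\det(\mathcal{S}^*)^{-(n+1)} = \det(\mathcal{S}^*)^{n-3}
$$
is a square line bundle exactly when $n$ is odd, the bundle $E$ is relatively orientable under the theorem's hypothesis and its $\mathbb{A}^1$-Euler number $e(E)\in\GW(k)$ is defined.

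By the $\mathbb{A}^1$ Poincaré--Hopf principle established earlier in the paper, for a general configuration $\{\pi_i\}$ the section $\sigma$ has only isolated simple zeros at the lines $L$ meeting all the $\pi_i$, and
$$
e(E) = \sum_L \Tr_{k(L)/k}\,\mathrm{ind}_L(\sigma),
$$
where $\mathrm{ind}_L(\sigma)\in\GW(k(L))$ is the local index. Near a zero $L$, one identifies a neighborhood in $X$ with $\Hom(W,k(L)^{n+1}/W)\cong\bbA^{2(n-1)}_{k(L)}$, chooses the basis of $W$ giving the normalized lifts $(c_{i,0},c_{i,1})$ and the basis of $k(L)^{n+1}/W$ giving the normalized lifts $(d_{i,0},\dots,d_{i,n-2})$ of Definition \ref{df:normalized_coordinates}, and trivializes each summand of $E$ accordingly. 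The Jacobian of $\sigma$ in these coordinates and trivializations is precisely the $(2n-2)\times(2n-2)$ matrix displayed in (\ref{i(L)def}), so $\mathrm{ind}_L(\sigma) = \langle i(L)\rangle$ in $\GW(k(L))$, yielding the left-hand side.

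The right-hand side is obtained by computing $e(E)$ globally. Pairing the $2n-2$ summands of $E$ into $n-1$ copies of $\det(\mathcal{S}^*)^{\oplus 2}$, each factor carries a natural symplectic (hence metabolic) structure, and the resulting hyperbolicity of Euler classes of metabolic bundles forces $e(E)$ to be a multiple of $\langle 1\rangle + \langle -1\rangle$ in $\GW(k)$. The coefficient is pinned down by base change to $\bar k$ together with the classical Schubert calculus count $\tfrac{(2n-2)!}{n!(n-1)!}$, giving $e(E) = \tfrac12\tfrac{(2n-2)!}{n!(n-1)!}(\langle 1\rangle+\langle -1\rangle)$.

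The main obstacle is the explicit local computation: verifying that the Jacobian determinant with respect to the normalized lifts matches the block structure of (\ref{i(L)def}) and is well-defined modulo squares. The interplay between the rescaling freedom in trivializing each $\det(\mathcal{S}^*)$ summand (which acts on $i(L)$ by squares) and the change-of-basis freedom on $W$ and $k(L)^{n+1}/W$ requires careful bookkeeping; Definition \ref{df:normalized_coordinates} is designed to make this bookkeeping rigid and unambiguous.
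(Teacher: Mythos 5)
Your setup exactly mirrors the paper's: same bundle $\calV = (\Lambda^2\calS^*)^{\oplus 2n-2}$ on $\Gr(2,n+1)$, same section from the $\pi_i$, same identification of $\ind_L\sigma$ with $\Tr_{k(L)/k}\langle i(L)\rangle$ (their Proposition \ref{geometric-interpretation}), and the same ``Euler number $=$ sum of local indices'' strategy. Your relative-orientability computation $\Hom(\det T_X,\det\calV)\cong(\det\calS^*)^{n-3}$ is a cleaner restatement of what their Lemma \ref{lm:label_oriented_coordinates_Gr} achieves with explicit clutching functions, and it correctly isolates why $n$ odd is needed.

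The gap is in your computation of the right-hand side. You claim that pairing the summands into $n-1$ copies of $(\det\calS^*)^{\oplus 2}$, each with its ``natural symplectic (hence metabolic) structure,'' forces $e(\calV)$ to be a multiple of $h$. But the alternating form $\Lambda^2(\calL\oplus\calL)\to\calL^{\otimes 2}$ is \emph{twisted}: it takes values in $\calL^{\otimes 2}$, not in $\calO$, so $\calL\oplus\calL$ is not a symplectic bundle in the usual $\calO$-valued sense and the Lagrangian $\calL$ does not make $\calV$ metabolic as a quadratic space. There is no theorem of the form ``twisted-metabolic bundle has hyperbolic $\GW$-valued Euler number'' that you can cite here, and it is not obvious how to produce one. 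The paper's Proposition \ref{factoringalpha} proves hyperbolicity by a genuinely different mechanism: write $\calV=\calE\oplus\calE'$ with $\calE'$ of \emph{odd} rank (e.g.~a single $\Lambda^2\calS^*$ summand), use independence of the Euler number from the section to get $e(\calV)=\langle\alpha\rangle^{\rk\calE'}e(\calV)=\langle\alpha\rangle e(\calV)$ for all $\alpha\in k^*$, and then pass to $k(t)$, apply the residue $\partial_t\colon K^{\MW}_0(k(t))\to K^{\MW}_{-1}(k)\cong \W(k)$ to conclude $e(\calV)$ vanishes in $\W(k)$, hence is a multiple of $h$. This Levine--Fasel style argument is what you should substitute for the ``symplectic'' claim; everything else in your proposal matches the paper.
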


In the case where $n=3$, this theorem reduces to Theorem \ref{intro:4linesthm} with the difference that the hypothesis that the characteristic of $k$ is not $2$ is relaxed to only require that $k \subseteq k(L)$ be separable.

There are many tools available for studying $\GW(k)$. These can be used in conjunction with enumerative results valued in the Grothendieck--Witt group such as Theorems \ref{intro:4linesthm} and \ref{intro:2n-2codim2hyperplanesthm} and we briefly indicate a few of these tools. For simplicity, assume the characteristic of $k$ is not $2$. The fundamental ideal of $\GW(k)$ is the kernel of the rank homomorphism $\rank: \GW(k) \to \ZZ$ taking the isomorphism class of a bilinear form to its rank. It's powers define a filtration of $\GW(k)$. The Milnor conjecture, proven by Voevodsky and Orlov--Vishik--Voevodsky using breakthrough work of the former, identifies the associated graded of this filtration with the \'etale cohomology group $\rH^*(k, \Z/2)$ and the mod $2$ Milnor $K$-theory. These isomorphisms gives rise to invariants valued in $\rH^*(k, \Z/2)$, or equivalently Milnor K-theory, the first of which are the rank, discriminant, Hasse-Witt, and Arason invariants \cite{Milnor_AlgK-theory_quadratic_forms} \cite{OrlovVishikVoevodskay} \cite{Voevodsky-Motivic_cohomology_Z2-coeffs} \cite{Voevodsky-Reduced_power_ops_motivic_cohomology}. Analogous results in characteristic $2$ are proven in \cite{KatoBilforms}. For many fields, much is understood about $\GW(k)$, for example giving algorithms to determine if two given sums of the generators $\langle a \rangle$ for $a \in k^*/(k^*)^2$ of $\GW(k)$ are equal, as well as computations of $\GW(k)$. See for example \cite{lam05}. Applying invariants of $\GW(k)$ to the equalities of Theorems \ref{intro:4linesthm} and \ref{intro:2n-2codim2hyperplanesthm} produces other equalities, which may be valued in more familiar groups. A selection of such results follows. 

Let the field $k$ be  the real numbers,  $k= \bbb{R}$. Applying the signature to both sides of Theorem \ref{intro:4linesthm}, we see that if the two lines are real, the sign of $ \lambda_L - \mu_L$ must be reversed for the two lines. If the two lines are a complex conjugate pair, the theorem gives no information. More generally, in the situation of Theorem \ref{intro:2n-2codim2hyperplanesthm}, half of the real lines will have $i(L)$ negative and half positive. 

\begin{rmk}
The question of which lines are real in both these situations is the subject of extensive work. Sottile has shown that the lines may all be real \cite[Theorem C]{Sottile-enumerative_geometry_for_real_Grassmannian_lines_Pn}. There are also connections to the B. and M. Shapiro conjecture, proven by Eremenko and Gabrielov \cite{EremenkoGabrielov-RationalFunctionsRealCriticalPoints} and Mukhin, Tarasov, and Varchenko \cite{MukhinTarasovVarchenko-ShapiroConjecture} \cite{MukhinTarasovVarchenko-SchubertCalculusGL}. For example, the conjecture's positive solution gives a large class of examples where the lines meeting four lines in space are real \cite[1.2.1]{Sottile-Real_solutions_to_equations_from_geometry}. More generally, see \cite[ Chapters 8,9, and 10]{Sottile-Real_solutions_to_equations_from_geometry} for a lovely exposition of the Shapiro conjecture. Vakil's Schubert induction \cite{Vakil_Schubert_induction} shows that all Schubert problems are enumerative over $\bbb{R}$ in the sense that all the solutions are real. See \cite[Proposition 2.4]{Vakil_Schubert_induction}. His work on the Galois groups of enumerative problems over any base moreover gives information about the fields of definition of the solutions \cite[Section 2.9]{Vakil_Schubert_induction}. 
\end{rmk}

For $k = \bbb{F}_q$ a finite field with $q$ elements and $q$ odd, $\GW(k) \cong \Z \oplus \Z/2\Z$ given by the rank and the discriminant maps. Applying the discriminant to Theorem~\ref{intro:4linesthm} produces, for example:

\begin{co}\label{Fqcorthm4lines}
Let $k = \bbb{F}_q$ be a finite field with $q$ elements, with $q$ odd. Let $L_1$,$L_2$,$L_3$,$L_4$ be general lines defined over $k$ in $\bbb{P}^3_k$. If a line $L$ meeting $L_i$ for $i=1,\ldots,4$ is defined over $\bbb{F}_{q^2}$, then $$\lambda_L - \mu_L = \begin{cases}   &\mbox{is a non-square for } q \cong 1 \mod 4 \\ 
&\mbox{is a square for } q \cong 3 \mod 4 .\end{cases}$$
\end{co}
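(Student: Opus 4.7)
The plan is to apply the discriminant invariant $\mathrm{disc}\colon\GW(\bbb{F}_q)\to\bbb{F}_q^*/(\bbb{F}_q^*)^2$ to both sides of the equality in Theorem \ref{intro:4linesthm}, then extract information about $a := \lambda_L-\mu_L\in\bbb{F}_{q^2}$. Since there are exactly two lines over $\overline{\bbb{F}_q}$ meeting the four general lines, and the hypothesis $k(L)=\bbb{F}_{q^2}$ (not $\bbb{F}_q$) forces Frobenius to permute them nontrivially, these two lines form a single closed point of degree $2$. Thus the left-hand side of Theorem \ref{intro:4linesthm} collapses to the single term $\Tr_{\bbb{F}_{q^2}/\bbb{F}_q}\langle a\rangle$, while the right-hand side $\langle 1\rangle+\langle -1\rangle$ has discriminant $-1$ in $\bbb{F}_q^*/(\bbb{F}_q^*)^2$.

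Next I would compute the discriminant of the transfer form explicitly. Fix a non-square $d\in\bbb{F}_q^*$ and write $\bbb{F}_{q^2}=\bbb{F}_q(\alpha)$ with $\alpha^2=d$. In the basis $\{1,\alpha\}$ the Gram matrix of $\Tr_{\bbb{F}_{q^2}/\bbb{F}_q}\langle 1\rangle$ is the diagonal matrix with entries $2$ and $2d$, so its determinant is $4d$. For general $a\in\bbb{F}_{q^2}$, the Gram matrix of $\Tr_{\bbb{F}_{q^2}/\bbb{F}_q}\langle a\rangle$ is obtained by multiplying on the right by the matrix of multiplication-by-$a$, whose determinant is $N_{\bbb{F}_{q^2}/\bbb{F}_q}(a)$. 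Hence $\mathrm{disc}(\Tr_{\bbb{F}_{q^2}/\bbb{F}_q}\langle a\rangle)\equiv d\cdot N(a)$ modulo squares, and setting this equal to $-1$ gives $N(a)\equiv -d\pmod{(\bbb{F}_q^*)^2}$.

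To finish, note that since $d$ is a non-square, $-d=(-1)\cdot d$ is a square in $\bbb{F}_q^*$ if and only if $-1$ is a non-square, i.e., if and only if $q\equiv 3\pmod 4$. So $N(a)$ is a square in $\bbb{F}_q^*$ exactly when $q\equiv 3\pmod 4$. I would then close the argument using the standard fact, proved via cyclicity of $\bbb{F}_{q^2}^*$ together with the identity $N(a)=a^{q+1}$, that an element of $\bbb{F}_{q^2}^*$ is a square in $\bbb{F}_{q^2}$ if and only if its norm is a square in $\bbb{F}_q$. Combining these two equivalences transports the dichotomy for $N(a)$ to the dichotomy for $a=\lambda_L-\mu_L$ claimed in the corollary. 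The main obstacle is the discriminant computation for the trace form; once in hand, only standard manipulations remain, together with the sanity check, guaranteed by the genericity hypothesis in Theorem \ref{intro:4linesthm}, that $a\neq 0$.
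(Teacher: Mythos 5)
Your proof is correct and follows essentially the same route as the paper's: apply the discriminant to the identity of Theorem \ref{intro:4linesthm}, note that the sum collapses to the single term $\Tr_{\bbb{F}_{q^2}/\bbb{F}_q}\langle \lambda_L - \mu_L\rangle$ because the two geometric lines form one degree-two closed point, and compare discriminants with $\langle 1\rangle + \langle -1\rangle$. The only difference is that you compute $\mathrm{disc}\bigl(\Tr_{\bbb{F}_{q^2}/\bbb{F}_q}\langle a\rangle\bigr) \equiv d\cdot N_{\bbb{F}_{q^2}/\bbb{F}_q}(a)$ directly via the Gram matrix and then use the norm-characterization of squares, whereas the paper outsources that step to \cite[II.2.3]{Conner_Perlis}; your explicit calculation is a clean, self-contained substitute.
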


The proofs of Theorems \ref{intro:4linesthm} and \ref{intro:2n-2codim2hyperplanesthm} and Corollary \ref{Fqcorthm4lines} can be found in Section~\ref{mainthm_Section}. For these proofs, we use a strategy based on joint work \cite{CubicSurface} of Jesse Kass and the second named author. Namely, we take a classical enumerative problem over the complex numbers that admits a solution using the Euler number from algebraic topology, and rework it over a field $k$ using an enriched Euler number valued in $\GW(k).$ This enriched Euler number was defined in \cite[Definition 33]{CubicSurface} to be a sum of certain local indices. To obtain an enumerative result over $k$, one then needs a geometric interpretation of these local indices, which is guessed on a case-by-case basis. 

In the present case, there is a classical count of the appropriate number of complex lines as a power of the first Chern class of the line bundle $\calS^* \wedge \calS^*$ on an appropriate Grassmannian, where $\calS^* \wedge \calS^*$ denotes the wedge of the dual tautological bundle with itself. This characteristic number is equivalent to the Euler number of $\oplus_{i=1}^N \calS^* \wedge \calS^*$ for an appropriate $N$. The Euler number constructed in \cite{CubicSurface} is a class in $\GW(k)$ associated to a relatively oriented vector bundle of rank $r$ on a smooth, proper $r$-dimensional scheme over $k$, which admits sections with only isolated zeros. Picking one such section, the Euler number is then the sum over these isolated zeros of a local index associated to each zero, which can be expressed as a local degree in $\bbb{A}^1$-homotopy theory. As in the classical case, a configuration of codimension $2$ hyperplanes (or more precisely the set of equations whose zero loci are the hyperplanes) determines a section of $\oplus_{i=1}^N \calS^* \wedge \calS^*$. We therefore have that a fixed element of $\GW(k)$, the Euler number, is a sum over the lines of the local degree in $\GW(k)$ of a section. This equality generalizes the fixed $\bbb{Z}$-valued Euler number on the complex Grassmiannian from classical algebraic topology expressed as a sum over the lines of the local $\bbb{Z}$-valued degrees at the zero locus of a section. In the complex case, these latter local degrees happen to be generically all one because complex manifolds and algebraic sections are orientable, giving the number of lines as the Euler number. Over other fields, interesting local degrees or indices arise. 

Readers who would like to avoid $\bbb{A}^1$-homotopy theory may do so, as the construction of the Euler number of \cite{CubicSurface} uses the classes from the Eisenbud--Khimshiashvili--Levine Signature Formula as local indices. By \cite{KWA1degree} and \cite{BBMMOTraceA1degree}, these local indices are the local $\bbb{A}^1$-degrees, but they also have a concrete construction from commutative algebra that can be programmed into a computer. Note, however, that it is more than just analogy that links our results to algebraic topology; there is a full-fledged theory of $\bbb{A}^1$-homotopy theory providing a connection \cite{morelvoevodsky1998} \cite{morel}. Moreover, there is a machine producing enriched results of the form given in Theorems \ref{intro:4linesthm} and \ref{intro:2n-2codim2hyperplanesthm}. Once the machine has produced an enrichment, there is guess-work involved in identifying local indices, but once this is accomplished, the end result is by design independent of $\bbb{A}^1$-homotopy theory, and one can then, at least in some cases, provide alternate proofs that are also independent. 

A main tool here is the Euler number of \cite{CubicSurface}. There are older constructions of Euler classes in $\bbb{A}^1$-homotopy theory in \cite{BargeMorel} \cite[8.2]{morel}, more recent ones in \cite{DJK} \cite{LevineRaksit_MotivicGaussBonnet}, and other constructions of Euler classes or numbers for schemes independent of $\bbb{A}^1$-homotopy theory in \cite{Grig_Ivan} \cite{Mandal-Srid-Euler_classes_complete_intersections} \cite{BhatSrid-Zero_cycles} \cite{BhatSrid-Euler_class_group} \cite{bhatwadekar06}. The latter four are of a different focus. The construction in \cite{Grig_Ivan} is the image of that of \cite{CubicSurface} after quotienting by trace forms, and is discussed in more detail in \cite[1.1]{CubicSurface}. Compatibility of those of \cite{BargeMorel} \cite{DJK} \cite{CubicSurface} \cite{LevineRaksit_MotivicGaussBonnet} \cite[8.2]{morel} is shown in \cite{BW-Euler}. See also \cite{FaselGroupesCW} \cite{AsokFasel_comp_euler_classes} \cite{Levine-EC} for results on Euler classes and useful tools for their computation.

Matthias Wendt has a lovely alternate computation of the Euler classes of $\oplus_{i=1}^N \calS^* \wedge \calS^*$, using a Schubert Calculus he has developed \cite{Wendt-oriented_schubert}. He also considers the resulting applications to enumerative geometry. His results as well as methods are different from the ones given here. His work \cite{Wendt-oriented_schubert} builds on his previous work \cite{Wendt-Chow-Witt_Gr} and his joint work \cite{Wendt-Hornbostel} with Hornbostel. 

Unique to the present paper are the given computations of the local $\GW(k)$-degrees or indices of the lines and their geometric interpretations, the resulting Theorems \ref{intro:4linesthm} and \ref{intro:2n-2codim2hyperplanesthm}, and consequences. In the case of Theorem \ref{intro:4linesthm}, we also find the stronger result that the cross ratios of the points and the planes switch when we switch the two lines over their fields of definition, i.e., $\mu_L = \lambda_{\tilde{L}}$ and $\mu_{\tilde{L}} = \lambda_{L}$ (see Theorem \ref{thmcrossratio} and Example \ref{example}). This can be verified independently. We give computations of the relevant Euler numbers and take the opportunity to further the study of the Euler number of \cite{CubicSurface}. 

This paper fits into a recent program that could be called $\bbb{A}^1$-enumerative geometry, or enumerative geometry enriched in quadratic forms. See also \cite{BW-Euler} \cite{Hoyois_lef} \cite{KWA1degree} \cite{KWA1degreeClassical} \cite{CubicSurface} \cite{Levine-EC} \cite{Levine-NormalCone}  \cite{Levine-Witt} \cite{Levine-Welschinger} \cite{McKean-Bezout} \cite{Pauli-quintic3fold} \cite{Wendt-oriented_schubert}.

\subsection{Outline}

In Section~\ref{setup}, we give the necessary results and notation to have a well-defined Euler number of $\oplus_{i=1}^N \calS^* \wedge \calS^*$ in $\GW(k)$ as a sum over lines of a local index or degree. In Section~\ref{LinearAlgebraFormula}, we give formulas for the local index, in particular in terms of the $i(L)$ for Theorem~\ref{intro:2n-2codim2hyperplanesthm} above. In Section~\ref{EulercharV}, we give computations of Euler numbers, one using arguments of Fasel and Levine. In Section~\ref{crossratio}, we prove the connection between the local indices and the cross-ratios appearing in Theorem~\ref{intro:4linesthm}. Section~\ref{mainthm_Section} contains the proofs of the stated results in the introduction and an explicit example. 

\section{Local coordinates on Grassmannians, orientations on vector bundles, and Euler numbers}\label{setup}

\subsection{Preliminaries on relative orientations on vector bundles and compatible trivializations}\label{subsection:prelim_vb_or}
Let $k$ be a field, and let $X$ be a smooth $k$-scheme.
A vector bundle $V \to X$ is {\em oriented} (sometimes called {\em weakly oriented}) if it is equipped with a choice of isomorphism, called an {\em orientation}, $\det V \cong \mathcal{L}^{\otimes 2}$ for a line bundle $\mathcal{L}$ on $X$. A smooth scheme $X$ is oriented if its tangent bundle is. Let $U$ be a Zariski open subset of $X$. A section in $\mathcal{L}^{\otimes 2}(U)$ is called a {\em square} if it is of the form $s \otimes s$ for $s$ in $\mathcal{L}(U)$. A trivialization $\psi: V\vert_U \stackrel{\cong}{\to} \mathcal{O}\vert_U^r$ is {\em compatible} with a given orientation if the composition $\det \mathcal{O}^r\vert_U \to \det V\vert_U \to \mathcal{L}^{\otimes 2}$ takes the canonical section in $\det \mathcal{O}^r(U)$ to a square. $V$ is {\em relatively oriented} when $\Hom(\det TX, \det V)$ is oriented. While we do not need the notion of relative orientation for this paper, we prove some results in this greater generality for their own interest.

\subsection{Euler numbers for relatively oriented bundles}\label{subsection:Euler_numbers_relatively_or_bundles}
We use the Euler number of \cite{CubicSurface}. Namely, let $\calE$ be a relatively oriented vector bundle of rank $r$ on a smooth proper $k$-scheme $X$ of dimension $n=r$, equipped with a section $\sigma$ with only isolated zeros. For each zero $p$ of $\sigma$, there is an associated local index $\ind_p \sigma \in \GW(k)$ \cite[Definition 28, Corollary 29]{CubicSurface}, which we will discuss in some more detail in Section \ref{explicitformula} for the convenience of the reader. This local index is an $\bbb{A}^1$-degree in the sense of Morel \cite{morel} \cite{KWA1degree} \cite{BBMMOTraceA1degree}. The Euler number $e(\calE, \sigma)$ is defined to be the sum over the zeros of $\sigma$ of the local indices:
$$e(\calE, \sigma) = \sum_{p: \sigma(p) = 0} \ind_p \sigma.$$       

The results of the following lemma were part of the motivation for the definition of $e(\calE, \sigma)$. We provide a formal proof for completeness. Suppose $k$ is a subfield of $\C$. Then there is an associated complex manifold $X(\C)$ and complex vector bundle $\calE(\C)$, which has an Euler class $e_{\C}$. Similarly, if $k$ is a subfield of $\R$, there is an associated topological Euler number $e_{\R}$ of the relatively oriented bundle $\calE(\R) \to X(\R)$. Let $\rank$ and $\sgn$ denote the rank and signature, respectively, of a bilinear form or element of $\GW(\C)$ or $\GW(\R)$ respectively.

\begin{lm}\label{lm:rk_sgn_e}
 Let $k$, $\calE$, and $X$ be as above.
\begin{enumerate}
\item \label{it:rank=eC} If $k \subset \C$, then $\rank e(\calE, \sigma) = e_{\C}$.
\item \label{it:sgn=eR} If $k \subset \R$, then $\sgn e(\calE, \sigma) = e_{\R}$.
\end{enumerate}
\end{lm}

\begin{proof}
The topological Euler number of $\calE(\C)$ or $\calE(\R)$ can be computed as a sum of over the zeros of $\sigma$ of a local index, $$ e_{\C} =  \sum_{p: \sigma(\C)(p) = 0} \ind_p \sigma(\C) $$ $$ e_{\R} =  \sum_{p: \sigma(\R)(p) = 0} \ind_p \sigma(\R),$$ where $\sigma(\C)$ (respectively $\sigma(\R)$) denotes the associated smooth section on complex (respectively real) points. See \cite[Theorem 11.16]{BottTu} \cite[Section 1]{okonek14}. Choose a hermitian metric (respectively metric) on $\calE(\C)$ (respectively $\calE(\R)$) and let $E$ denote the corresponding sphere bundle. The index $\ind_p \sigma(\C)$ (respectively $\ind_p \sigma(\R)$) is defined to be the topological degree of the composite map \begin{equation}\label{map_Cspheres_aroundp}\partial  \overline{D_r(\C)} \stackrel{\sigma(\C)}{\longrightarrow} E\vert_{\overline{D_r(\C)}} \cong  \overline{D_r(\C)} \times S^{2r-1} \to S^{2r-1} \end{equation} \begin{equation}\label{map_Rspheres_aroundp}(\text{respectively }\partial  \overline{D_r(\R)} \stackrel{\sigma(\R)}{\longrightarrow} E\vert_{\overline{D_r(\R)}} \cong  \overline{D_r(\R)} \times S^{r-1} \to S^{r-1} )\end{equation} where $\overline{D_r}$ denotes the closure of a small disk around $p$ and the final maps in the given compositions are projections. By \cite[Corollary 4]{Palamodov}, the degree of \eqref{map_Cspheres_aroundp} is the rank of $\ind_p \sigma$, proving \eqref{it:rank=eC}. By the Eisenbud--Khimshiashvili--Levine Signature Theorem \cite{khimshiashvili} \cite{eisenbud77}, the degree of \eqref{map_Rspheres_aroundp} is the signature of $\ind_p \sigma$, proving \eqref{it:sgn=eR}.
\end{proof}

\begin{rmk}
A stronger version of this lemma in the case $k \subset \R$ follows from work of Hornbostel--Wendt--Xie--Zibrowius \cite{HWXZ-real_cycle}: it is not necessary to assume that there is a section $\sigma$ with only isolated zeros, as can be seen by combining \cite[Proposition 6.1 and Section 4.3]{HWXZ-real_cycle} and using \cite[Corollary p 3]{BW-Euler} to identify $e(\calE, \sigma)$ with the pushforward of the algebraic Euler class of \cite[Proposition 6.1]{HWXZ-real_cycle}.
\end{rmk}

\subsubsection{A formula for the local degree using compatible local coordinates}\label{explicitformula}
We now recall a computation of the local degree or index $\ind_p \sigma \in \GW(k)$ from \cite[Section4]{CubicSurface}.

Choose local coordinates on $X$ and a local trivialization of $\calE$ which are compatible with the relative orientation of $\calE$. See \cite[Definition 19]{CubicSurface} for the general definition of such coordinates and trivializations. In our cases, $TX$ and $\calE$ will both be oriented and the choice of local coordinates amounts to finding open neighborhoods of each point isomorphic to affine spaces $\bbb{A}^r_k$ such that the induced trivialization of $TX$ is compatible with the orientation in the sense of Section \ref{subsection:prelim_vb_or}. This identifies $\sigma$ with a function $\sigma: \bbb{A}_k^r \to \bbb{A}_k^r$. If $\sigma$ has a zero at a point $p$ such that the corresponding function $(f_1, \ldots, f_r): \bbb{A}_k^r \to \bbb{A}_k^r$ satisfies the condition that the Jacobian determinant $J=\det (\frac{\partial f_i}{\partial x_j})_{ij}$ is non-zero in $k(p)$, then we say that $\sigma$ has a {\em simple} zero. The local degree at a simple zero $p$ such that $k \subseteq k(p)$ is a separable field extension is computed in \cite[Proposition 32]{CubicSurface}, and we include the statement for clarity. 

\begin{pr}\label{CubicSurfacetrJprop}
Suppose $\sigma$ has a simple zero at a point $p$. Then the corresponding local degree $\ind_p \sigma$  is $\Tr_{k(p)/k} \langle J \rangle$ if $k \subseteq k(p)$ is a separable field extension.
\end{pr}

\subsection{Vector bundles on Grassmannians}
We will now introduce the relevant vector bundles for our enumerative problems and equip them with relative orientations. We give local trivializations and coordinates compatible with the relative orientations.

Let $n$ be an odd integer, and let $\Gr(2,n+1)$ denote the Grassmannian parametrizing $2$-dimensional {\em{subspaces}} of $k^{n+1}$, or equivalently, lines in $\bbb{P}^n_k$. For any field extension $k \subseteq E$ and a $k$-scheme $X$, we will denote the base change of $X$ to $E$ by $X_E$, and similarly for vector bundles over $X$. By a {\em line} $L$ in $\bbb{P}^n_k$, we mean a closed point of this Grassmannian. We write $k(L)$ for the residue field of this closed point and call this the field of definition of $L$. After basechange to $k(L)$, the line $L$ corresponds to a closed subscheme of $\bbb{P}^n_{k(L)}$.

Let $\calS$ be the tautological bundle on $\Gr(2,n+1)$. Then the line bundle $\calO(1)$ corresponding to the Plucker embedding of $\Gr(2,n+1)$ is $\Lambda^2 \calS^*$. Let $\calQ$ denote the quotient bundle on $\Gr(2,n+1),$ defined as the cokernel of the inclusion of the tautological bundle into the trivial bundle $$0 \to \calS \to \bbb{A}^{n+1} \to \calQ \to 0. $$

\subsubsection{Compatible local trivialization for the tangent bundle}

Let $\{e_1,\ldots,e_{n+1}\}$ denote a basis for $k^{n+1}$, and let $\{\phi_1,\ldots,\phi_{n+1}\}$ denote the dual basis of $(k^{n+1})^*$. The $2$-dimensional subspace $k e_{n} \oplus k e_{n+1}$ spanned by $\{e_{n}, e_{n+1} \}$ determines a $k$-point of $\Gr(2,n+1)$. There are local coordinates $$\Spec k[x_1,\ldots,x_{n-1}, y_1,\ldots, y_{n-1}] \to \Gr(2,n+1)$$ of $\Gr(2,n+1)$ around this point such that $(x_1,\ldots,x_{n-1}, y_1,\ldots, y_{n-1})$ corresponds to the span of  $\{\tilde{e}_{n}, \tilde{e}_{n+1} \}$, where $\{\tilde{e}_{1}, \ldots, \tilde{e}_{n+1} \}$ is the basis of $k^{n+1}$ defined by $$  \begin{cases} e_i  &\mbox{for } i = 1,\ldots, n-1 \\ 
\sum_{i=1}^{n-1} x_i e_i + e_{n} & \mbox{for } i=n
\\ 
\sum_{i=1}^{n-1} y_i e_i + e_{n+1} & \mbox{for } i={n+1}. \end{cases} $$ These local coordinates determine a local trivialization of $T\Gr(2,n+1)$ using the canonical trivialization of the tangent space of $\bbb{A}^{2(n-1)}$. We will say that coordinates are {\em compatible} with a given orientation if the corresponding local trivialization of the tangent bundle is. 
Let $\{\tilde{\phi}_{1}, \ldots, \tilde{\phi}_{n+1} \}$ be the dual basis for the basis $\{\tilde{e}_{1}, \ldots, \tilde{e}_{n+1} \}$ of $k^{n+1}$. The content of the following proof is contained in \cite[Lemma 42]{KWA1degree}, but we include the proof in the stated generality for completeness.

\begin{lm}\label{lm:label_oriented_coordinates_Gr}
Let $n$ be odd. There is an orientation of $T\Gr(2,n+1)$ such that the local coordinates given by the maps $\Spec k[x_1,\ldots,x_{n-1}, y_1,\ldots, y_{n-1}] \to \Gr(2,n+1)$ just described are compatible.
\end{lm}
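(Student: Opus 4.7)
The plan is to use the standard identification of the tangent bundle $T\Gr(2,n+1)$ as a tensor product of the bundles $\calS^*$ and $\calQ$, exhibit the resulting determinant as a perfect square (which is where the hypothesis ``$n$ odd'' enters), and verify compatibility by a direct calculation in the chart.

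First I would invoke the canonical isomorphism $T\Gr(2,n+1) \cong \calS^* \otimes \calQ$, whose fiber at $[W]$ is $\Hom(W, k^{n+1}/W)$. Since $\calS^*$ has rank $2$ and $\calQ$ has rank $n-1$, and $\det \calQ \cong \det \calS^*$ (from the defining sequence $0 \to \calS \to \cO^{n+1} \to \calQ \to 0$), one computes
$$\det T\Gr(2,n+1) \;\cong\; (\det \calS^*)^{\otimes(n-1)} \otimes (\det \calQ)^{\otimes 2} \;\cong\; (\det \calS^*)^{\otimes(n+1)}.$$
As $n$ is odd, $n+1$ is even, so setting $\calL := (\det \calS^*)^{\otimes(n+1)/2}$ yields a natural isomorphism $\det T\Gr(2,n+1) \cong \calL^{\otimes 2}$, which I take as the candidate orientation.

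Next I would unwind this identification in the given chart. Here $\calS$ is trivialized by the universal frame $\tilde e_n, \tilde e_{n+1}$, hence $\calS^*$ by $\tilde \phi_n, \tilde \phi_{n+1}$, while $\calQ$ is trivialized by the residue classes $[e_1], \dots, [e_{n-1}]$ of $e_1,\ldots,e_{n-1}$ modulo $\calS$. Differentiating the defining formulas for $\tilde e_n$ and $\tilde e_{n+1}$ with respect to $x_i$ and $y_i$ shows that, under $T\Gr \cong \calS^* \otimes \calQ$, the coordinate vector fields correspond as $\partial/\partial x_i \leftrightarrow \tilde \phi_n \otimes [e_i]$ and $\partial/\partial y_i \leftrightarrow \tilde \phi_{n+1} \otimes [e_i]$. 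Thus the canonical volume form of the chart
$$\omega \;=\; \bigwedge_{i=1}^{n-1}\bigl(\partial/\partial x_i \wedge \partial/\partial y_i\bigr)$$
maps to $\varepsilon \cdot (\tilde \phi_n \wedge \tilde \phi_{n+1})^{\otimes(n-1)} \otimes ([e_1] \wedge \cdots \wedge [e_{n-1}])^{\otimes 2}$ in $(\det \calS^*)^{\otimes(n-1)} \otimes (\det \calQ)^{\otimes 2}$, where $\varepsilon \in \{\pm 1\}$ is a Koszul sign that arises from reshuffling the ordered wedge $\bigwedge(\tilde\phi_a \otimes [e_i])$ into a pure tensor and depends only on $n$.

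Finally, because $n-1$ is even, this expression equals $\varepsilon \cdot s^{\otimes 2}$ with $s = (\tilde \phi_n \wedge \tilde \phi_{n+1})^{\otimes(n-1)/2} \otimes ([e_1] \wedge \cdots \wedge [e_{n-1}])$ a local section of $\calL$. Since $\varepsilon$ is a global constant (a unit in $k$), rescaling the isomorphism $\det T\Gr(2,n+1) \cong \calL^{\otimes 2}$ by $\varepsilon$ absorbs the sign and yields an orientation under which $\omega = s^{\otimes 2}$ is a literal square, as required. The same rescaled orientation is compatible with every chart of the described form, by $GL_{n+1}$-equivariance of the entire construction. The main technical obstacle is keeping careful track of the Koszul sign $\varepsilon$, but since it depends only on $n$ and not on any point of the chart, it is easily absorbed into the global orientation.
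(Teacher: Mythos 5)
Your argument is correct, and it takes a genuinely different route from the paper's. The paper works entirely with transition functions: for two charts of the described form, it computes the clutching function of $\det T\Gr(2,n+1)$ induced by the identification $T\Gr(2,n+1)\cong\Hom(\calS,\calQ)$ as a monomial in $\det B$ (the change of basis for $\calS$) and $\det C$ (for $\calQ$) whose exponents are both even precisely because $n$ is odd, so that it is a square on every overlap; the orientation is then obtained by gluing square roots, with the charts compatible by construction. You instead exhibit the line bundle $\calL$ explicitly: from $T\Gr(2,n+1)\cong\calS^*\otimes\calQ$ and $\det\calQ\cong\det\calS^*$ you get $\det T\Gr(2,n+1)\cong(\det\calS^*)^{\otimes(n+1)}$, the parity of $n$ enters as the statement that $n+1$ is even so that $\calL:=(\det\calS^*)^{\otimes(n+1)/2}$ makes sense, and compatibility of each chart is a direct fiberwise computation up to a universal Koszul sign $\varepsilon$, which you correctly observe is independent of the chart (by $\GL_{n+1}$-equivariance of the canonical identifications involved) and can be absorbed into the isomorphism. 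What your version buys is concreteness: it identifies $\calL$ as a half-power of the Pl\"ucker bundle $\calO(1)=\det\calS^*$, which is useful to know explicitly. What the paper's version buys is economy: there is no Koszul sign to track and no naturality of the tensor decomposition to invoke — once the clutching function is a square, the conclusion is immediate. Both arguments are valid; yours is a pleasant complement that names the orientation.
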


\begin{proof}
Let $\{e_1,\ldots,e_{n+1}\}$ and $\{e_1',\ldots,e_{n+1}'\}$ denote two chosen bases for $k^{n+1}$.  As above, consider the corresponding local coordinates on the open subsets $U$ and $U'$ of $\Gr(2,n+1)$. Under the canonical identification of $T \Gr(2,n+1)$ with $\Hom(\calS, \calQ)$ the corresponding trivialization $T \Gr(2,n+1)\vert_U \cong \calO_U^{2(n-1)}$ corresponds to the basis of $T \Gr(2,n+1)(U)$ given by $$\{ \tilde{\phi}_{n} \otimes \tilde{e}_i : i =1, n-1\} \cup \{ \tilde{\phi}_{n+1} \otimes \tilde{e}_i : i =1, n-1\}$$ and similarly for $U'$. (Note the slight abuse of notation when we consider, say $\tilde{e}_1$ at the point $p$ to be an element of $k(p)^{n+1}/(k(p) \tilde{e}_n \oplus k(p) \tilde{e}_{n+1})$.) These trivializations determine clutching functions for $T\Gr(2,n+1)$, i.e., isomorphisms \begin{equation}\label{clutching-change-basis}\calO^{2(n-1)}\vert_{U \cap U'} \to T\Gr(2,n+1)\vert_{U \cap U'} \to \calO^{2(n-1)}\vert_{U \cap U'},\end{equation} which are given by the change-of-basis matrix $M$ in $\GL_{2(n-1)} (U \cap U')$
relating $$\tilde{\phi}_{n} \otimes \tilde{e}_1, \tilde{\phi}_{n} \otimes \tilde{e}_2, \ldots, \tilde{\phi}_{n} \otimes \tilde{e}_{n-1},  \tilde{\phi}_{n+1} \otimes \tilde{e}_1, \tilde{\phi}_{n+1} \otimes \tilde{e}_2, \ldots, \tilde{\phi}_{n+1} \otimes \tilde{e}_{n-1}$$ to $$\tilde{\phi}'_{n} \otimes \tilde{e}'_1, \tilde{\phi}'_{n} \otimes \tilde{e}'_2, \ldots, \tilde{\phi}'_{n} \otimes \tilde{e}'_{n-1},  \tilde{\phi}'_{n+1} \otimes \tilde{e}'_1, \tilde{\phi}'_{n+1} \otimes \tilde{e}'_2, \ldots, \tilde{\phi}'_{n+1} \otimes \tilde{e}'_{n-1}.$$ 

The expressions $\tilde{\phi}_i(e_j')$ and $\tilde{\phi}'_i(e_j)$ determine regular functions on $U \cap U'$, because $\tilde{\phi}_i$ and $\tilde{\phi}_i'$ are sections of $\calS^*(U \cap U')$ and $e_j$ and $e_j'$ are sections of $\calS(U \cap U')$. At any point $p$ of $U \cap U'$, the span of $\{  \tilde{e}_n, \tilde{e}_{n+1}\}$ and $\{  \tilde{e}_n', \tilde{e}_{n+1}'\}$ are equal, i.e., $k(p) \tilde{e}_n \oplus k(p) \tilde{e}_{n+1} = k(p) \tilde{e}_n' \oplus k(p) \tilde{e}_{n+1}'$, and both give bases of the fiber of $\calS$ at $p$. In the same way, $\{  \tilde{\phi}_n, \tilde{\phi}_{n+1}\}$ and $\{  \tilde{\phi}_n', \tilde{\phi}_{n+1}'\}$ are bases for $\calS^*$ over $U \cap U'$ and the change of basis matrix is given by $$A = \begin{bmatrix}
\tilde{\phi}'_n (\tilde{e}_n)& \tilde{\phi}'_n (\tilde{e}_{n+1})\\
\tilde{\phi}'_{n+1} (\tilde{e}_n)& \tilde{\phi}'_{n+1} (\tilde{e}_{n+1})
\end{bmatrix}   $$ Thus $A$ is a $2\times 2$ matrix $A \in \GL_2(U \cap U')$ with $\det A$ in $\calO^*(U \cap U')$. Similarly, the images of $\{\tilde{e}_1, \ldots, \tilde{e}_{n-1} \}$ and $\{\tilde{e}'_1, \ldots, \tilde{e}'_{n-1} \}$ are bases for $\calQ$ over $U \cap U'$ the change of basis matrix for and the change of basis matrix is an $(n-1) \times (n-1)$ matrix $B\in \GL_{n-1}(U \cap U')$, and in particular $\det B$ in $\calO^*(U \cap U')$. Then $M$ is the tensor product $M = A \otimes B$. It follows that $\det M = \det A^{n-1} \det B^2$. Since $n$ is assumed to be odd, we have that $\det M$ is the square of $\det A^{(n-1)/2} \det B$. Since $A$ and $B$ are the clutching functions of vector bundles, $\det A^{(n-1)/2} \det B$ satisfies the cocycle condition to define the clutching functions of a line bundle defining an orientation of $T\Gr(2,n+1)$ compatible with the described local coordinates.

\end{proof}

\subsubsection{The vector bundles for our enumerative problems}\label{vecandsec}

\begin{df}
 Let $\calV$ be the rank $2n-2$ vector bundle $\oplus_{i=1}^{2n-2} \Lambda^2 \calS^*$. 
\end{df}

Let $\pi_1,\pi_2,\ldots,\pi_{2n-2}$ be $2n-2$ codimension two subspaces of $k^{n+1}$. For each $\pi_i$, choose a basis for the annihilator of $\pi_i$ in $(k^{n+1})^*$, namely, two linearly independent linear forms $\alpha_i$ and $\beta_i$ in $(k^{n+1})^*$ vanishing on $\pi_i$. Let $\sigma$ be the section of $\calV$ given by $\sigma = (\alpha_1 \wedge \beta_1,\ldots,\alpha_{2n-2} \wedge \beta_{2n-2})$. (More explicitly, any point $L$ of $\Gr(2,n+1)$ corresponds to a $2$-dimensional subspace $W$ of $k(L)^{\oplus n+1}$. The elements $\alpha_i$ and $\beta_i$ tensored with $k(L)$ then restrict to functionals $\alpha_i\vert_{W}$ and $\beta_i\vert_{W}$ in $W^*$. The fiber of $\Lambda^2 \calS^*$ at $L$ is canonically identified with $W^* \wedge W^*$, and $ (\alpha_1\vert_W \wedge \beta_1\vert_W,\ldots,\alpha_{2n-2}\vert_W \wedge \beta_{2n-2}\vert_W)$ determines the section $\sigma$.)

The following lemma is standard, but we include it for clarity.

\begin{lm}
Let $L$ be a point of  $\Gr(2,n+1)$ and $\sigma$ the section of $\calV$ as described above. Then $\sigma(L)=0$ if and only if $L$ meets all of the hyperplanes $\pi_1,\pi_2,\ldots,\pi_{2n-2}$. 
\end{lm}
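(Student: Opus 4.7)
The plan is to unwind the definition of $\sigma(L)$ fiber by fiber and use the two-dimensionality of $W$ in a crucial way. Passing to $k(L)$, the point $L$ corresponds to a $2$-dimensional subspace $W \subset k(L)^{n+1}$, and the fiber of $\Lambda^2\calS^*$ at $L$ is canonically identified with $\Lambda^2 W^*$. Under this identification the $i$-th component of $\sigma(L)$ is $\alpha_i|_W \wedge \beta_i|_W$, so the task reduces to showing that for each $i$,
\[
\alpha_i|_W \wedge \beta_i|_W = 0 \quad \Longleftrightarrow \quad L \cap \pi_i \neq \emptyset.
\]

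First I would observe that $L \cap \pi_i \neq \emptyset$ as subschemes of $\PP^n_{k(L)}$ is equivalent to $W \cap (\pi_i \otimes_k k(L)) \neq 0$ as linear subspaces of $k(L)^{n+1}$, since $\pi_i$ is the projectivization of the linear subspace cut out by $\alpha_i = \beta_i = 0$. Next I would rewrite this in terms of the restricted functionals: $W \cap (\pi_i \otimes_k k(L))$ is exactly the common kernel of $\alpha_i|_W$ and $\beta_i|_W$ in $W$, so the intersection is nonzero iff the map $W \to k(L)^2$ given by $(\alpha_i|_W, \beta_i|_W)$ fails to be injective, i.e.\ iff $\alpha_i|_W$ and $\beta_i|_W$ are linearly dependent elements of $W^*$.

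Finally I would invoke the fact that $W^*$ is $2$-dimensional: two vectors in a $2$-dimensional space are linearly dependent if and only if their wedge in $\Lambda^2 W^*$ vanishes. Combining the three equivalences yields the $i$-th component characterization, and conjoining over $i = 1, \ldots, 2n-2$ gives the lemma. No step here is a real obstacle — the only subtlety worth remarking on is the base change to $k(L)$ (to make sense of $L$ as a subscheme and of $W$ as a vector subspace), which is why the argument is phrased with the slight abuse of notation already present in the paper.
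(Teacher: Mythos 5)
Your proof is correct and is essentially the same as the paper's (which is stated in one compressed sentence): both reduce to the linear-algebra fact that $\alpha_i|_W$ and $\beta_i|_W$ have a common kernel in $W$ if and only if they are linearly dependent, equivalently if and only if $\alpha_i\wedge\beta_i$ pairs to zero against $\Lambda^2 W$. You just expand the chain of equivalences (base change to $k(L)$, identification of $L\cap\pi_i$ with the kernel of $(\alpha_i|_W,\beta_i|_W)$, and the $\dim W^*=2$ criterion) that the paper leaves implicit.
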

\begin{proof}
If a codimension $2$ hyperplane $\pi$ is cut out by the two linear forms $\alpha,\beta$ and a dimension $2$ subspace $L$ is spanned by the vectors $e,f$, then $(\alpha \wedge \beta)(e \wedge f) = 0$ if and only if both $\alpha$ and $\beta$ simultaneously vanish on some linear combination of $e$ and $f$, or in other words, if and only if $\pi \cap L \neq \emptyset$. 
\end{proof}

\begin{lm}\label{Voriented}
 There is an orientation of the vector bundle $\calV \colonequals \oplus_{i=1}^{2n-2} \Lambda^2 \calS^*$ on $\Gr(2,n+1)$ determined by the local trivialization of $\calV$ coming from any local trivialization of $\calO(1)$.
\end{lm}
\begin{proof}
The top exterior power $\Lambda^{2n-2} \calV \cong ((\Lambda^2 \calS^*)^{\otimes n-1})^{\otimes 2} \cong \calO(2n-2)$ inherits a local trivialization whose transition functions are the $(2n-2)$th power of the transition functions for $\calO(1)$ using the same cover. Thus these local trivializations determine an isomorphism $\Lambda^{2n-2} \calV \cong (\calO(1)^{\otimes (n-1)})^{\otimes 2}$. Changing the local trivialization of $\calO(1)$ changes this isomorphism by a square, which gives an isomorphic orientation.
\end{proof}

\subsubsection{Identifying sections of $\calV$ with isolated zeroes}

We now identify many sections of $\calV$ with isolated zeros so that we may use the explicit formula for the Euler number of Section~\ref{subsection:Euler_numbers_relatively_or_bundles}.

Fix oriented local coordinates on $\Gr(2,n+1)$ and the orientation on $\calV$ and compatible local trivialization of $\calV$ described in Lemma~\ref{Voriented}. Let $i \colon G \colonequals \Gr(2,n+1) \rightarrow \bbP^N$ be the Pl{\"{u}}cker embedding. Let $\check{\bbb{P}^N}$ be the dual projective space parametrizing hyperplanes in $\bbb{P}^N$. Let
 \[ Z \colonequals \{ (H_1,H_2,\ldots,H_{2n-2}) \in (\check{\bbb{P}^N})^{2n-2} \ | \ \dim (G \cap H_1 \cap \cdots \cap H_{2n-2}) \neq 0 \} .\]
Let $S \colonequals \{ (\sigma_1,\ldots,\sigma_{2n-2}) \in H^0(G, \Lambda^2 \calS^*)^{2n-2} \ | \ \sigma_i \neq 0 \ \forall \ i \}$. We have a natural map $$p \colon S \rightarrow (\check{\bbb{P}^N})^{2n-2},$$ which is $\bbb{G}_{m,k}^{2n-2}$-bundle, and we let $U \colonequals S \setminus p^{-1}(Z)$. The purpose of the following lemma and corollary is to show that the complement of $U$ is codimension at least $2$, from which it will follow that $e(\calV, \sigma)$ is independent of $\sigma$ for $\sigma$ in $U$.

\begin{lm}\label{Bezout}
 The set $Z$ is closed and $\codim Z \geq 2$.
\end{lm}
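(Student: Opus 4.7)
The plan is to exhibit $Z$ as the image under a proper morphism of a closed subvariety whose dimension I can control via Chevalley's theorem on upper semicontinuity of fiber dimension. Introduce the incidence variety
\[
I \colonequals \{(x, H_1, \ldots, H_{2n-2}) \in G \times (\check{\bbb{P}^N})^{2n-2} \,:\, x \in H_i \text{ for every } i\},
\]
together with the two projections $\pi_1 \colon I \to G$ and $\pi_2 \colon I \to (\check{\bbb{P}^N})^{2n-2}$. Since the hyperplanes in $\bbb{P}^N$ through a fixed point form a copy of $\check{\bbb{P}^{N-1}}$, the map $\pi_1$ realizes $I$ as a Zariski-locally trivial bundle with fiber $(\check{\bbb{P}^{N-1}})^{2n-2}$ over the smooth irreducible base $G$. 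Hence $I$ is smooth and irreducible of dimension $(2n-2) + (2n-2)(N-1) = (2n-2)N$, matching $\dim(\check{\bbb{P}^N})^{2n-2}$.

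Next I would record the elementary observation that $G \cap H_1 \cap \cdots \cap H_{2n-2}$ is automatically non-empty, so that "$\dim \neq 0$" really means "$\dim \geq 1$": inductively, any positive-dimensional projective variety meets every hyperplane, so after each successive cut the intersection stays non-empty and loses at most $1$ in dimension, leaving dimension $\geq 0$ after $2n-2$ cuts. With this, one can write
\[
I^{\mathrm{bad}} \colonequals \{p \in I \,:\, \dim_p \pi_2^{-1}(\pi_2(p)) \geq 1\}, \qquad Z = \pi_2(I^{\mathrm{bad}}).
\]
Chevalley's theorem makes $I^{\mathrm{bad}}$ closed in $I$, and since $\pi_2$ is proper (both source and target are projective) its image $Z$ is closed in $(\check{\bbb{P}^N})^{2n-2}$, proving the first half of the lemma.

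For the codimension bound, the argument is a short dimension count. A Bertini-style argument (or simply the existence of the classical complex count of isolated lines) produces at least one tuple $(H_1,\ldots,H_{2n-2})$ whose intersection with $G$ is $0$-dimensional, so $I^{\mathrm{bad}}$ is a proper closed subvariety of the irreducible variety $I$, giving $\dim I^{\mathrm{bad}} \leq (2n-2)N - 1$. By construction every fiber of the surjection $\pi_2|_{I^{\mathrm{bad}}} \colon I^{\mathrm{bad}} \to Z$ has dimension at least $1$, so
\[
\dim Z \;\leq\; \dim I^{\mathrm{bad}} - 1 \;\leq\; (2n-2)N - 2 \;=\; \dim (\check{\bbb{P}^N})^{2n-2} - 2,
\]
yielding $\codim Z \geq 2$.

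The only non-formal step, and the place I expect any friction at all, is verifying $I^{\mathrm{bad}} \subsetneq I$: that is, exhibiting a single tuple of hyperplanes cutting $G$ in a $0$-dimensional subscheme. Bertini applied to the Pl\"ucker embedding $G \hookrightarrow \bbb{P}^N$ does this in any characteristic, and one could alternatively invoke the classical enumerative fact that the relevant intersection is finite for generic choices over $\bbb{C}$ and then spread out. Everything else is formal fiber-dimension bookkeeping, so I do not anticipate a real obstacle.
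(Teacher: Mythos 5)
Your proof is correct, and it takes a genuinely different route from the paper's. The paper decomposes $Z = \bigcup_{i=1}^{2n-2}\pi_i^{-1}(Z_i)$, where $Z_i$ records the first index $i$ at which the dimension of $G \cap H_1 \cap \cdots \cap H_i$ jumps; closedness of each $Z_i$ is proved by semicontinuity for a smaller incidence variety with only $i$ hyperplanes, and the codimension bound $\codim Z_i \geq 2$ comes from the pointwise observation that the hyperplanes $H_i$ containing a fixed positive-dimensional component $A$ of $G \cap H_1 \cap \cdots \cap H_{i-1}$ form a codimension $\geq 2$ linear subspace of $\check{\bbb{P}^N}$ (pick two distinct points of $A$), followed by the fiber-dimension theorem applied to $\pi_i^0 \colon Z_i \to (\check{\bbb{P}^N})^{i-1}$. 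You instead work with the single global incidence variety $I$ over the full product, observe $\dim I = \dim(\check{\bbb{P}^N})^{2n-2}$ and that $I$ is irreducible, define the bad locus $I^{\mathrm{bad}}$ once, and deduce $\codim Z \geq 2$ from the two drops $\dim I^{\mathrm{bad}} \leq \dim I - 1$ and $\dim Z \leq \dim I^{\mathrm{bad}} - 1$. Your argument is shorter and more transparent, but it requires an external input that the paper's does not: you must exhibit at least one tuple of hyperplanes cutting $G$ in dimension $0$ to conclude $I^{\mathrm{bad}} \subsetneq I$ (your Bertini or spreading-out step, which is fine but should be checked to hold over an arbitrary field; the simple fact that $\dim(W \cap H) = \dim W - 1$ for $H$ outside a proper closed subset of $\check{\bbb{P}^N}$ suffices and avoids characteristic issues). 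The paper's inductive decomposition is a bit longer but entirely self-contained, extracting codimension $2$ directly from the geometry of hyperplanes through a positive-dimensional variety, with no need to know in advance that a good tuple exists.
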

\begin{proof}
 For any irreducible subvariety $W \subset \bbb{P}^N$ with $\dim W \geq 1$ and for any hyperplane $H \in \check{\bbb{P}^N}$, we have $\dim (W \cap H) = \dim W-1$ if and only if $H \not\supset W$, and $\dim W = \dim (W \cap H)$ otherwise (see, e.g., \cite[I 6.2 Theorem 5]{Shafarevich_Basic_algebraic_geometry1}). By applying this in turn to the finitely many irreducible components of each of $W=G,G\cap H_1,\ldots, G\cap H_1 \cap H_2 \cap \cdots \cap H_{2n-3}$, we see that $Z = \bigcup_{i=1}^{2n-2} \pi_i^{-1} (Z_i)$, where $Z_i \subset (\check{\bbb{P}^N})^{i}$ is the subset  \begin{align*} Z_i \colonequals \{ (H_1,H_2,&\ldots,H_{i}) \in (\check{\bbb{P}^N})^{i} \ |\\ &\ H_i \supset A, \textup{ for some irreducible component } A \textup{ of } G \cap H_1 \cap \cdots \cap H_{i-1}  \}, \end{align*} and $\pi_i: (\check{\bbb{P}^N})^{2n-2} \to (\check{\bbb{P}^N})^i$ is the projection on the first $i$ factors. Note that $Z_i$ could equivalently be written $$ Z_i =   \{ (H_1,H_2,\ldots,H_{i}) \in (\check{\bbb{P}^N})^{i} \
 | \dim (G \cap H_1 \cap \cdots \cap H_{i}) > 2n-2 - i \}.$$ 

We will first show $Z_i$ is a closed subset of $(\check{\bbb{P}^N})^{i}$. To see this, consider the incidence variety $I \subset (\check{\bbb{P}^N})^{i} \times \bbb{P}^N$ defined $$I \colonequals \{ (H_1,H_2,\ldots,H_{i}, x): H_1,\ldots,H_i \in \check{\bbb{P}^N}, x \in G \cap H_1 \cap \cdots \cap H_i \}.$$ The projection map restricted to $I$ gives a map $f: I \to (\check{\bbb{P}^N})^{i}$, and $Z_i$ is the locus in  $(\check{\bbb{P}^N})^{i}$ of points where the fiber has larger than expected dimension, which is closed by, e.g., \cite[I 6.3 Theorem 7]{Shafarevich_Basic_algebraic_geometry1}.

We will now show $\codim Z_i \geq 2$. Consider the map $\pi^0_i:  (\check{\bbb{P}^N})^{i} \to (\check{\bbb{P}^N})^{i-1}$ given by projection onto the first $i-1$ factors. For each $(H_1, \ldots, H_{i-1})$ in $(\check{\bbb{P}^N})^{i-1}$ and each irreducible component $A$ of $G \cap H_1 \cap \cdots \cap H_{i-1}$, the dimension of $A$ satisfies $\dim (A) \geq 2n-2-(i-1) \geq 1$, and therefore also $\dim A \geq 1$. Let $P,Q$ be two distinct points on $A$. Then $Z'_A \colonequals \{ H \in \check{\bbb{P}^N} \ | \ H \supset  A \} \subset \check{\bbb{P}^N}$ is contained in the intersection of the two distinct hyperplanes $\{ H \in \check{\bbb{P}^N} \ | \ P \in H \}$ and $\{ H \in \check{\bbb{P}^N} \ | \ Q \in H \},$ and therefore has codimension $\geq 2$. Thus the fibers of the restriction of $\pi_i^0$ to $Z_i$ have codimension $\geq 2$. It follows (\cite[I 6.3 Theorem 7]{Shafarevich_Basic_algebraic_geometry1}) that $\codim Z_i \geq 2$. 

Another application of (\cite[I 6.3 Theorem 7]{Shafarevich_Basic_algebraic_geometry1}) shows that $\codim \pi_i^{-1} (Z_i) \geq 2$ for every $i$, and therefore $\codim Z \geq 2$. 
\end{proof}

Let $U$ be the subset of $H^0(G, (\Lambda^2 \calS^*)^{2n-2})$ defined $U \colonequals S \setminus p^{-1}(Z)$.

\begin{co}\label{e-well-defined}
A section $\sigma$ of $\calV=\oplus_{i=1}^{2n-2} \wedge^2 \calS^*$ in $U$ has only isolated zeros and $e(\calV, \sigma)$ is independent of the choice of such $\sigma$.
\end{co}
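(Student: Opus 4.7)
The first assertion---that every $\sigma \in U$ has only isolated zeros on $G$---follows directly from the definition of $U$. Via the Pl\"ucker embedding $i \colon G \hookrightarrow \PP^N$ one has $\Lambda^2 \calS^* \cong i^* \calO(1)$, so each nonzero section $\sigma_j \in H^0(G, \Lambda^2 \calS^*)$ corresponds to a unique hyperplane $H_j \subset \PP^N$ with $\{\sigma_j = 0\} = G \cap H_j$. Consequently the zero locus of $\sigma = (\sigma_1, \ldots, \sigma_{2n-2}) \in S$ on $G$ equals $G \cap H_1 \cap \cdots \cap H_{2n-2}$, which is $0$-dimensional precisely when $(H_1, \ldots, H_{2n-2}) \notin Z$, that is, when $\sigma \in U$.

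For the independence of $e(\calV, \sigma)$ on $\sigma \in U$, the plan is to invoke the $\bA^1$-homotopy invariance of the Euler number established in \cite{CubicSurface}: given a morphism $\Phi \colon \bA^1 \to U$, one has $e(\calV, \Phi(t_0)) = e(\calV, \Phi(t_1))$ for any $k$-points $t_0, t_1 \in \bA^1$. It therefore suffices to connect any two $\sigma_0, \sigma_1 \in U$ by such a $\Phi$. The key geometric input, which Lemma \ref{Bezout} supplies, is that the complement of $U$ in the ambient affine space $A \colonequals H^0(G, \Lambda^2 \calS^*)^{\oplus (2n-2)}$ has codimension $\geq 2$: this complement is the union of the coordinate subspaces $\{\sigma_j = 0\}$ (each of codimension $\dim H^0(G, \calO(1)) = \binom{n+1}{2} \geq 2$) and of $p^{-1}(Z)$ (codimension $\geq 2$, since $p \colon S \to (\check{\PP^N})^{2n-2}$ is a $\Gm^{2n-2}$-bundle and $S$ is open in $A$).

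To produce $\Phi$, I consider the two-parameter family of affine conics $f_t(s) \colonequals (1-s)\sigma_0 + s\sigma_1 + s(1-s)\,t$ parameterized by $t \in A$; each $f_t \colon \bA^1 \to A$ sends $0 \mapsto \sigma_0$ and $1 \mapsto \sigma_1$. The incidence locus $I \colonequals \{(t, s) \in A \times \bA^1 : f_t(s) \notin U\}$ has fiber over each $s \notin \{0, 1\}$ equal to an affine translate of $A \setminus U$, hence of codimension $\geq 2$ in $A$, so $\dim I \leq \dim A - 1$ and the first projection $I \to A$ is not surjective. Any $t$ outside its image then yields the desired morphism $f_t \colon \bA^1 \to U$. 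The main subtlety is to ensure that the homotopy-invariance result of \cite{CubicSurface} applies to $\calV$ with its given orientation; this should be routine, since the orientation is a datum attached to $\calV$ itself and is preserved along the $\bA^1$-family.
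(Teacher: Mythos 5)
Your proof of the first assertion (isolated zeros) is identical to the paper's. Your proof of independence takes a genuinely different, and in one respect more explicit, route. Where the paper cites the proof of \cite[Lemma 57]{CubicSurface} as a black box for the fact that codimension $\geq 2$ implies $\bA^1$-chain connectedness, you construct an explicit one-parameter family of affine conics $f_t(s) = (1-s)\sigma_0 + s\sigma_1 + s(1-s)t$ and show by a fiber-dimension count that a generic $t$ gives an $\bA^1$-map into $U$ joining $\sigma_0$ to $\sigma_1$. This is a clean replacement for the ``chain of affine lines'' argument, since the straight segment between $\sigma_0$ and $\sigma_1$ is rigid and may leave $U$, but a conic through the two points has the extra freedom needed to avoid the bad locus. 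Both approaches hinge on exactly the same input, the codimension bound of Lemma~\ref{Bezout}, and both then appeal to homotopy invariance of the Euler number from \cite{CubicSurface}.

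There is, however, a gap that the paper's phrasing is careful to avoid and yours is not: your dimension count shows only that the good parameters $t$ form a nonempty Zariski-open subset of the affine space $A$, and over a finite (or more generally, a small) field such an open set need not contain a $k$-rational point. This is precisely why the paper's quotation of \cite[Lemma 57]{CubicSurface} carries the qualifier ``after possibly passing to an odd degree field extension.'' To repair your argument over finite fields you should note that one can always find a good $t$ over some odd-degree extension $L/k$ (e.g.\ $\bbF_{q^m}$ for $m$ odd and large), deduce $e(\calV,\sigma_0)\otimes_k L = e(\calV,\sigma_1)\otimes_k L$ in $\GW(L)$ by homotopy invariance and the base-change compatibility of the Euler number, and then conclude $e(\calV,\sigma_0)=e(\calV,\sigma_1)$ in $\GW(k)$ using the fact that $\GW(k)\to\GW(L)$ is injective for odd-degree extensions (Springer's theorem). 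Without this last step your argument proves the corollary only over infinite fields, whereas the paper needs it over $\bbF_q$ for Corollary~\ref{Fqcorthm4lines}. A second, more minor caveat: you invoke a general $\bA^1$-homotopy invariance statement for any morphism $\Phi\colon\bA^1\to U$; the cited \cite[Theorem 3]{CubicSurface} is stated in terms of connecting sections by affine lines, so you should either verify it applies verbatim to your conic family or reduce to the affine-line case via a chain.
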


For clarity, we remark that if $\sigma$ is defined over an extension field $E$ of $k$, then $e(\calV, \sigma)$ is an element of $\GW(E)$ and the claimed independence means that $e(\calV, \sigma)$ is the basechange to $E$ of $e(\calV, \sigma')$ for some section $\sigma'$ defined over $k$.

\begin{proof}
A section $\sigma$ of $\calV$ corresponding to a point of $U$ has only isolated zeros because the zeroes of $\sigma = (\sigma_1,\ldots,\sigma_{2n-2})$ are precisely the points in the intersection of $G$ and the hyperplanes corresponding to $\sigma_i$. 

Since $\codim Z \geq 2$, we also have $\codim p^{-1}(Z) \geq 2$, so it follows that $H^0(\calV) - U$ has codimension $\geq 2$. By the proof of  \cite[Lemma 57]{CubicSurface}, the fact that $H^0(\calV) - U$ has codimension $\geq 2$ implies that any two points of $U$ can be connected by affine lines in $U$, after possibly passing to an odd degree field extension. (This latter property, which is described in more detail in \cite[Definition 35 and Corollary 36]{CubicSurface}, is related to $\bbb{A}^1$-chain connectedness as in \cite[Section 2.2]{AsokMorel-smooth_varieties_up_toA1_htpy}.) By \cite[Theorem 3]{CubicSurface},  $e(\calV, \sigma)$  is independent of the choice of $\sigma$ in $U$. \qedhere
\end{proof}

\begin{rmk}
The independence of $e(\calV, \sigma)$ on the choice of $\sigma$ also follows from \cite[Theorem 1.1]{BW-Euler} by a different proof. 
\end{rmk}

\begin{rmk}\label{goodsec}
 Throughout the rest of the paper, we will only work with sections $\sigma$ of $\calV$ in the open set $U$ described above of the form $\sigma = \oplus_{i = 1}^{2n-2} \alpha_i \wedge \beta_i$ for functionals $\alpha_i,\beta_i \in (k^{n+1})^*$ as described in the second paragraph of Section~\ref{vecandsec}. By Corollary~\ref{e-well-defined}, this means we have a well-defined Euler number. 
\end{rmk}

\subsection{A formula for the local index $\ind_L \sigma$}
\begin{pr}\label{indexformula} Let $\sigma$ be the section $\sigma = \oplus_{i = 1}^{2n-2} \alpha_i \wedge \beta_i$ of $\calV$. Suppose that $\sigma$ has an isolated simple zero at the point $L=\Span(e_{n},e_{n+1})$ of $\Gr(2,n+1)$. Let $\alpha_i = \sum_j a_{ij} {\phi}_j$ and $\beta_i = \sum_j b_{ij} {\phi}_j$ be the expansion of the linear forms $\alpha_i$ and $\beta_i$ in terms of the chosen $k$-basis. Then
 \[ \ind_L \sigma = \Bigg\langle \det \begin{bmatrix}
\cdots & (a_{i1}b_{in+1}-a_{in+1}b_{i1}) & \cdots \\ & \vdots & \\ \cdots & (a_{ij}b_{in+1}-a_{in+1}b_{ij}) & \cdots \\ & \vdots & \\ \cdots & (a_{in-1}b_{in+1}-a_{in+1}b_{in-1}) & \cdots \\ \cdots & (a_{in}b_{i1}-a_{i1}b_{in}) & \cdots \\ & \vdots & \\ \cdots & (a_{in}b_{ij}-a_{ij}b_{in}) & \cdots \\ & \vdots & \\ \cdots & (a_{in}b_{in-1}-a_{in-1}b_{in}) & \cdots
\end{bmatrix} \Bigg\rangle  \]
\end{pr}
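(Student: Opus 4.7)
The plan is to apply Proposition~\ref{CubicSurfacetrJprop} directly, after writing $\sigma$ in oriented local coordinates at $L = \Span(e_n, e_{n+1})$ and in a compatible trivialization of $\calV$. Since $L$ is a $k$-rational point and the zero is assumed to be simple, Proposition~\ref{CubicSurfacetrJprop} identifies $\ind_L \sigma$ with $\langle J \rangle$, where $J$ is the Jacobian determinant at the origin of the function $\bbA_k^{2(n-1)} \to \bbA_k^{2(n-1)}$ representing $\sigma$. So the entire content of the proof is to carry out this linear-algebra computation and check that the resulting $J$ agrees (up to the row/column transpose that does not affect the determinant) with the displayed matrix.

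For the source I would take the chart $(x_1,\ldots,x_{n-1},y_1,\ldots,y_{n-1})$ around $L$ introduced in Section~\ref{setup}, which is compatible with the chosen orientation of $T\Gr(2,n+1)$ by Lemma~\ref{lm:label_oriented_coordinates_Gr}. At the point with coordinates $(x,y)$, the tautological $2$-plane is spanned by $\tilde e_n = \sum_{j=1}^{n-1} x_j e_j + e_n$ and $\tilde e_{n+1} = \sum_{j=1}^{n-1} y_j e_j + e_{n+1}$. For the target, I would trivialize each summand $\Lambda^2 \calS^*$ by the local section $\tilde\phi_n \wedge \tilde\phi_{n+1}$. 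The induced trivialization of $\det\calV = (\Lambda^2\calS^*)^{\otimes(2n-2)}$ sends the canonical section of $\det\calO^{2n-2}$ to $\bigl((\tilde\phi_n\wedge\tilde\phi_{n+1})^{\otimes(n-1)}\bigr)^{\otimes 2}$, which is manifestly a square; hence this trivialization is compatible with the natural orientation of $\calV$ (which exists because $2n-2$ is even).

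With these choices the $i$-th component of $\sigma$ becomes
$$f_i(x,y) = (\alpha_i \wedge \beta_i)(\tilde e_n \wedge \tilde e_{n+1}) = \alpha_i(\tilde e_n)\,\beta_i(\tilde e_{n+1}) - \alpha_i(\tilde e_{n+1})\,\beta_i(\tilde e_n),$$
with $\alpha_i(\tilde e_n) = \sum_{j=1}^{n-1} a_{ij} x_j + a_{in}$ and analogous expressions for the other three factors. Applying the product rule and evaluating at the origin (where $\alpha_i(\tilde e_n)|_0 = a_{in}$, $\alpha_i(\tilde e_{n+1})|_0 = a_{i,n+1}$, and similarly for $\beta_i$) yields
$$\frac{\partial f_i}{\partial x_j}\bigg|_0 = a_{ij}b_{i,n+1} - a_{i,n+1}b_{ij}, \qquad \frac{\partial f_i}{\partial y_j}\bigg|_0 = a_{in}b_{ij} - a_{ij}b_{in}.$$
These are exactly the entries of the displayed matrix when its columns are indexed by $i$ and its rows by $x_1,\ldots,x_{n-1},y_1,\ldots,y_{n-1}$. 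Thus the matrix shown is the transpose of the Jacobian, so its determinant equals $J$, and Proposition~\ref{CubicSurfacetrJprop} finishes the argument.

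The main point requiring care is the compatibility of the chosen trivialization of $\calV$ with the orientation used to define the Euler number: this is automatic because $2n-2$ is even, but it must be verified in order to trust the sign of the resulting generator in $\GW(k)$. Beyond that, the proof is routine differential calculus, and the only bookkeeping subtlety is keeping the row/column conventions of the Jacobian matched with the proposition's matrix.
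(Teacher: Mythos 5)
Your proof is correct and follows the same approach as the paper: oriented local coordinates $(x_j,y_j)$ at $L$, trivialization of each summand by $\tilde\phi_n\wedge\tilde\phi_{n+1}$, then computation of the Jacobian at the origin and an appeal to Proposition~\ref{CubicSurfacetrJprop}. The only cosmetic difference is that you obtain the coordinate function $f_i$ by evaluating $\alpha_i\wedge\beta_i$ on $\tilde e_n\wedge\tilde e_{n+1}$, whereas the paper expands $\alpha_i\wedge\beta_i$ in the $\tilde\phi$-basis and reads off the $\tilde\phi_n\wedge\tilde\phi_{n+1}$ coefficient --- dual bookkeeping for the same calculation --- and you explicitly check that the $\calV$-trivialization is orientation-compatible, a point the paper leaves tacit.
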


\begin{proof}

 The line $L$ corresponds to the origin in the affine patch $\Spec k[x_1,\ldots,x_{n-1}, y_1,\ldots, y_{n-1}]$.  Trivialize each of the $2n-2$ summands of $\calV$ in a neighbourhood of $L$ using the section $\tilde{\phi}_{n} \wedge \tilde{\phi}_{n+1}$ of $\calS^* \wedge \calS^*$. Let $(f_1,f_2,\ldots,f_{2n-2})$ be functions on the affine patch $\Spec k[x_1,\ldots,x_{n-1}, y_1,\ldots, y_{n-1}]$  be defined by the relations $\alpha_i \wedge \beta_i = f_i \cdot \tilde{\phi}_n \wedge \tilde{\phi}_{n+1}$. 
 
 By Proposition~\ref{CubicSurfacetrJprop}, it suffices to compute the matrix of partial derivatives of these functions at the origin and take its determinant. First we have the change of basis formulae 
$$  \phi_i = \begin{cases} \tilde{\phi}_i + x_i \tilde{\phi}_n + y_i \tilde{\phi}_{n+1}  &\mbox{for } i = 1,\ldots, n-1 
\\ \tilde{\phi}_n & \mbox{for } i=n
\\ \tilde{\phi}_{n+1} & \mbox{for } i={n+1} \end{cases} .$$
Now 
\begin{align*} 
\alpha_i \wedge \beta_i &= (\sum_j a_{ij} {\phi}_j) \wedge (\sum_j b_{ij} {\phi}_j) \\
&= [ \bigl( \sum_{j=1}^{n-1} a_{ij} (\tilde{\phi}_j + x_j \tilde{\phi}_n + y_j \tilde{\phi}_{n+1}) \big) + a_{in} \tilde{\phi}_n + a_{i n+1} \tilde{\phi}_{n+1} ] \wedge \\
&\qquad [ \bigl( \sum_{j=1}^{n-1} b_{ij} (\tilde{\phi}_j + x_j \tilde{\phi}_n + y_j \tilde{\phi}_{n+1}) \bigr) + b_{in} \tilde{\phi}_n + b_{i n+1} \tilde{\phi}_{n+1} ]
\end{align*}
Since we will be evaluating the matrix of partial derivatives at $e_n \wedge e_{n+1}$, we only need to focus on terms that have $\tilde{\phi}_n \wedge \tilde{\phi}_{n+1}$ or  $\tilde{\phi}_{n+1} \wedge \tilde{\phi}_{n}$. Also, we only need to pick out the linear terms in this expansion, so we may ignore the constant term and higher order terms in the Taylor expansion using the variables $x_i$ and $y_i$. Therefore
\[ \alpha_i \wedge \beta_i = \bigl[\ldots + \sum_j (a_{ij}b_{i n+1}-a_{i n+1}b_{ij}) x_j + \sum_j (a_{in} b_{ij}-a_{ij}b_{in}) y_j + \ldots \bigr] \tilde{\phi}_n \wedge \tilde{\phi}_{n+1}  .\]
Computing partial derivatives and evaluating at $e_n \wedge e_{n+1}$ gives the formula in the statement of the lemma. \qedhere
\end{proof}

\section{A geometric reinterpretation of the local index}\label{LinearAlgebraFormula}
Throughout this section, we will use the local coordinates on $\Gr(2,n+1)$, the orientations of $\Gr(2,n+1)$ and $\calV$, and the compatible local trivialization of $\calV$ introduced in the previous section.

We wish to express the local index at a simple zero $L$ of $\sigma$ in terms of the line $L$ and the configuration of the $\pi_i$. When $k \subseteq k(L)$ is a separable, we do this in terms of the field of definition $k(L)$, the configuration of the intersection points of $\pi_i \cap L$ on $L$, and the configuration of hyperplanes spanned by $\pi_i$ and $L$ in the space of hyperplanes of $\bbb{P}^n$ containing $L$, in the following manner.

Let $W \subset k(L)^{n+1}$ denote the dimension $2$ vector subspace corresponding to the line $L$, so $L$ is canonically isomorphic to $\bbb{P}W$. The space of hyperplanes containing $W$ is canonically identified with the projective space $\bbb{P}(k(L)^{n+1}/W)$. Let $W^*$ denotes the $k(L)$-linear dual of $W$, and similarly for $(k(L)^{n+1}/W)^*$. 

Although the intersection points $L \cap \pi_i$ and the hyperplanes spanned by the $\pi_i$ and $L$ only determine points of $\bbb{P}W$ and $\bbb{P}(k(L)^{n+1}/W)$, respectively, the section $\sigma = \oplus_{i=1}^{2n-2} \alpha_i \wedge \beta_i$ distinguishes points of $W^*$ and $(k(L)^{n+1}/W)^*$. Namely, since $L \cap \pi_i$ is non-empty, we have that the restrictions of $\alpha_i$ and $\beta_i$ to $W$ are linearly dependent, i.e., $\alpha_i \wedge \beta_i$ is in $$\Ker := \Ker ( (k(L)^{n+1})^* \wedge (k(L)^{n+1})^* \to W^* \wedge W^*).$$ There is a natural map \begin{equation}\label{map tokn/W*otimesW*} \Ker \to (k(L)^{n+1}/W)^* \otimes W^*.\end{equation} (To see this, note that the map $(k(L)^{n+1})^* \otimes (k(L)^{n+1}/W)^* \to \Ker/((k(L)^{n+1}/W)^* \wedge (k(L)^{n+1}/W)^*)$ is surjective with kernel $(k(L)^{n+1}/W)^* \otimes (k(L)^{n+1}/W)^* \subset (k(L)^{n+1})^* \otimes (k(L)^{n+1}/W)^*$, giving rise to a natural isomorphism $\Ker/((k(L)^{n+1}/W)^* \wedge (k(L)^{n+1}/W)^*) \cong W^*\otimes (k(L)^{n+1}/W)^*$.)

Choose bases of $W$ and  $k(L)^{n+1}/W$, giving rise to bases of their duals and therefore coordinates of $\bbb{P}W$ and $\bbb{P}(k(L)^{n+1}/W)$. Let $[c_{i0}, c_{i1}]$ be the coordinates of $\pi_i \cap L = [c_{i0}, c_{i1}]$, and let $[d_{i0},d_{i1},\ldots,d_{i,n-2}]$ be the coordinates of the plane spanned by  $\pi_i$ and $L$.

\begin{df}\label{df:normalized_coordinates}
A lift of the homogeneous coordinates $[c_{i0}, c_{i1}]$ and $[d_{i0},d_{i1},\ldots,d_{i,n-2}]$ to coordinates $(c_{i0}, c_{i1})$ and $(d_{i0},d_{i1},\ldots,d_{i,n-2})$ of vectors in $W^*$ and $(k(L)^{n+1}/W)^* $ are {\em normalized} if $(d_{i0},d_{i1},\ldots,d_{i,n-2}) \otimes (c_{i0}, c_{i1})$ is the image of $\alpha_i \wedge \beta_i$ under the map \eqref{map tokn/W*otimesW*}. 
\end{df}

\begin{pr}\label{geometric-interpretation}
Suppose that $L$ is a simple zero of $\sigma$ such that  $k \subseteq k(L)$ is a separable field extension. Pick normalized coordinates for the intersection points $\pi_i \cap L$ and the hyperplanes spanned by $\pi_i$ and $L$ as above, and let
\begin{equation}\label{i(L)def}
i(L) \colonequals \det \begin{bmatrix}
d_{10}c_{10} &\cdots & d_{i0}c_{i0} & \cdots & d_{2n-2,0}c_{2n-2,0}\\ \vdots&& \vdots && \vdots \\ d_{1j}c_{10} &\cdots & a_{ij}d_{i0} & \cdots & d_{2n-2,j}c_{2n-2,0}\\ \vdots & & \vdots & & \vdots \\  d_{1,n-2}c_{10} &\cdots & d_{i,n-2}c_{i0} & \cdots & d_{2n-2,n-2}c_{2n-2,0}\\ d_{10}c_{11}&\cdots & d_{i0}c_{i1} & \cdots & d_{2n-2,0}c_{2n-2,1} \\ \vdots&& \vdots && \vdots \\ d_{1j}c_{11} &\cdots & d_{ij}c_{i1} & \cdots & d_{2n-2,j}c_{2n-2,1}\\ \vdots & & \vdots & & \vdots \\ d_{1,n-2}c_{11} &\cdots & d_{i,n-2}c_{i1} & \cdots & d_{2n-2,n-2}c_{2n-2,1}
\end{bmatrix}
\end{equation} 
Then $\ind_L \sigma = \Tr_{k(L)/k} \langle i(L) \rangle$.
\end{pr}

\begin{proof}
By Proposition \ref{CubicSurfacetrJprop}, it is sufficient to show that the Jacobian determinant $J$ equals $i(L)$, namely $J =i(L)$ in $k(L)^*/(k(L)^*)^2$. In particular, we may assume that $k(L) = k$, and we do this now for notational simplicity. By Lemma \ref{lm:label_oriented_coordinates_Gr}, we may compute $J$ with local coordinates coming from a basis $\{e_1,\ldots,e_{n+1}\}$ such that $L$ corresponds to $W=\Span(e_{n},e_{n+1})$. Since changing the basis of $W$ and $k^n/W$ changes $i(L)$ by a square ($n$ is odd), we may also compute $i(L)$ using the basis $\{e_{n+1}, e_n\}$ of $W$ and the basis of $k^n/W$ determined by the images of $\{e_1,\ldots,e_{n-1}\}.$ This choice determines the normalized coordinates of $\pi_i \cap L$ and the plane spanned by $\pi_i$ and $L$.

Since $\pi_i = \{\alpha_i = 0, \beta_i = 0 \}$ intersects $L$, there is a unique $v_i$ in $W$ such that $\alpha_i(v_i) = \beta_i (v_i) = 0$. It follows that that the vectors $(a_{in}, a_{i,n+1})$ and $(b_{in}, b_{i,n+1})$ are linearly dependent in the linear dual $W^*$ of $W$. In particular, by replacing $(\alpha_i, \beta_i)$ with either $(\alpha_i - c \beta_i, \beta_i)$ for some constant $c$ or $(-\beta_i, \alpha_i)$, we may assume that $(a_{in}, a_{i,n+1}) = (0,0)$.

By Proposition \ref{indexformula}, it follows that \begin{equation}\label{indLsimgafirstdet}\ind_L \sigma = \left\langle \det \begin{bmatrix}
\cdots & a_{i1}b_{in+1} & \cdots \\ & \vdots & \\ \cdots & a_{ij}b_{in+1} & \cdots \\ & \vdots & \\ \cdots & a_{in-1}b_{in+1}& \cdots \\ \cdots & -a_{i1}b_{in}& \cdots \\ & \vdots & \\ \cdots & -a_{ij}b_{in} & \cdots \\ & \vdots & \\ \cdots & -a_{in-1}b_{in} & \cdots
\end{bmatrix} \right\rangle.\end{equation} 

Note that $[b_{i,n+1}, -b_{in}]$ are coordinates in $\bbb{P}W$ for the hyperplane $\{ \alpha_i = \beta_i = 0\} \cap W$ in $W$; $[a_{i1}, a_{i2}, \ldots, a_{i n-1}]$ are coordinates in $\bbb{P}(k^n/W)$ for the plane spanned by $\pi_i$ and $L$; and that $(b_{i,n+1}, -b_{in})$ and $(a_{i1}, a_{i2}, \ldots, a_{i n-1})$ are the normalized coordinates for our chosen bases. \qedhere 
\end{proof}

\section{The Euler number of $\mathcal{V}$}\label{EulercharV}

Let $X$ be a smooth, proper scheme of dimension $n$ over $k$. Let $\calE \to X$ and $\calE' \to X$ be vector bundles of ranks $r$ and $r'$ such that $r+r' =n$ and $\calE \oplus \calE'$ is relatively orientable. Suppose that $\sigma$ and $\sigma'$ are global sections of $\calE$ and $\calE'$ respectively such that the section $\sigma \oplus \sigma'$ has only have isolated zeros admitting Nisnevich local coordinates. The hypothesis that the zeros of $\sigma \oplus \sigma'$ admit Nisnevich coordinates is automatic when $n \geq 1$ by \cite[Proposition 2.23]{BW-Euler}.

\begin{pr}\label{factoringalpha} If $r'$ is odd, then $e(\calE \oplus \calE') := e(\calE \oplus \calE', \sigma \oplus \sigma')$ is an integer multiple of the hyperbolic form $h = \langle -1 \rangle + \langle 1 \rangle$ in $\GW(k)$.
\end{pr}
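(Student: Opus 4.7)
The plan is to first show that $\langle \alpha \rangle \cdot e = e$ in $\GW(k)$ for every $\alpha \in k^*$ by a local scaling argument, and then to deduce that $e$ vanishes in the Witt ring $W(k) = \GW(k)/\bbb{Z} h$ via Milnor's residue exact sequence; vanishing in $W(k)$ is equivalent to $e$ being an integer multiple of $h$ in $\GW(k)$.

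At each zero $p$ of $\sigma \oplus \sigma'$, I would choose Nisnevich local coordinates and local trivializations of $\calE$ and $\calE'$ so that $\sigma \oplus \alpha \sigma'$ is realized as the function $(f_1, \ldots, f_r, \alpha g_1, \ldots, \alpha g_{r'}) \colon \bbb{A}^n_{k(p)} \to \bbb{A}^n_{k(p)}$. This is the composition of $(f_1, \ldots, f_r, g_1, \ldots, g_{r'})$ with the linear map $\ell_\alpha$ scaling the last $r'$ coordinates by $\alpha$; the local $\bbb{A}^1$-degree of $\ell_\alpha$ at the origin is $\langle \alpha^{r'} \rangle$, and since $r'$ is odd, $\langle \alpha^{r'} \rangle = \langle \alpha \rangle$ in $\GW(k(p))$. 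Multiplicativity of local degrees combined with Frobenius reciprocity for $\Tr_{k(p)/k}$ (valid since $\alpha \in k^*$) then gives $\ind_p(\sigma \oplus \alpha \sigma') = \langle \alpha \rangle \cdot \ind_p(\sigma \oplus \sigma')$ in $\GW(k)$; the non-simple case is handled analogously via the EKL form construction of the local index from \cite{CubicSurface}. Summing over zeros and invoking the hypothesis that $e(\calE \oplus \calE', \sigma \oplus \alpha \sigma') = e$, this yields $\langle \alpha \rangle \cdot e = e$ in $\GW(k)$ for every $\alpha \in k^*$.

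The same local argument, applied over the base change $X_{k(t)}$ to the section $\sigma \oplus t \sigma'$ (using the $\bbb{A}^1$-homotopy invariance of the Euler class to identify the Euler class of this generically-scaled section with $e_{k(t)}$), produces $\langle t \rangle \cdot e_{k(t)} = e_{k(t)}$ in $\GW(k(t))$. Reducing modulo $h$ and applying the second residue $\partial_t$ at the prime $t = 0$ from Milnor's exact sequence $0 \to W(k) \to W(k(t)) \to \bigoplus_\pi W(k[t]/\pi) \to 0$, and using the formulas $\partial_t \langle ta \rangle = \langle a \rangle$ and $\partial_t \langle a \rangle = 0$ for $a \in k^*$, one computes $\partial_t((\langle t \rangle - 1) \cdot e_{k(t)}) = e$ in $W(k)$. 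Because the left-hand side is zero, $e = 0$ in $W(k)$, placing $e$ in the kernel $\bbb{Z} h$ of $\GW(k) \to W(k)$.

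The main obstacle will be the step from $\alpha \in k^*$ to $\alpha = t \in k(t)^*$: the hypothesis concerns scalings by elements of $k^*$ only, and one must verify that the Euler class of $\sigma \oplus u \sigma'$ is similarly invariant under scaling by the generic parameter $u$. This should follow from the $\bbb{A}^1$-homotopy invariance of the Euler class construction of \cite{CubicSurface}, applied to the family of sections $\sigma \oplus u \sigma'$ on $X$ parametrized by $u \in \G_{m, k}$, together with evaluation at the generic point of $\G_{m, k}$.
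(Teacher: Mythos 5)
Your proposal is correct and follows essentially the same route as the paper: establish the local scaling identity $\ind_x(\sigma \oplus \alpha \sigma') = \langle \alpha \rangle^{r'} \ind_x(\sigma \oplus \sigma')$, apply it over $k(t)$ with $\alpha = t$, and use a second residue at $t=0$ to kill $e$ in $W(k)$ — the paper works in $K^{\MW}_\bullet$ via Morel's residue map where you use the equivalent Milnor Witt-ring residue sequence, and the paper derives the local scaling from the Scheja--Storch bilinear form where you invoke multiplicativity of local degrees, but these are cosmetic differences. The obstacle you flag at the end — extending from $\alpha \in k^*$ to $\alpha = t \in k(t)^*$ — is a genuine subtlety that the paper itself passes over with the unproved assertion that ``$e(\calE_L \oplus \calE'_L)$ can be computed with the section $\sigma + \alpha\sigma'$ for any $\alpha \in L^*$''; in the intended application (Corollary~\ref{enrichedCatalan}) this is covered because the codimension-$\geq 2$ argument of Corollary~\ref{e-well-defined} and \cite[Theorem 3]{CubicSurface} give independence of the section over \emph{any} base field, not merely over $k$.
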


\begin{rmk}\label{rmk:hypothesis_timing}
When this paper was originally posted to ArXiv, the additional hypothesis that $e(\calE \oplus \calE', \sigma \oplus \alpha \sigma')$ is independent of $\alpha$ for all $\alpha \in k^*$ appeared in Proposition~\ref{factoringalpha}. Note that if $\sigma \oplus \sigma'$ is in the set $U$ in Corollary~\ref{e-well-defined}, so is $\sigma \oplus \alpha \sigma'$ for any $\alpha \in k^*$. By \cite[Theorem 1.1]{BW-Euler} this hypothesis is always satisfied, so it is now omitted, however, the original version was sufficient for the results of this paper. 
\end{rmk}

This proof is extracted from M. Levine's argument that the Euler characteristic of an odd dimensional scheme is a multiple of $h$ \cite[Theorem 7.1]{Levine-EC}. M. Levine also credits J. Fasel. Since the result and the context of the definitions is different, we give the proposition and proof.

\begin{rmk}\label{GWfacts} We will need various facts about the ring $GW(k)$ which we recall first for the convenience of the reader.
\begin{enumerate}
 \item The ring $\GW(k)$ is the zeroth graded summand of the graded ring $K^{\MW}(k)$ introduced by Morel, and then refined in joint work with Hopkins, presented in \cite[Chapter 3]{morel}. 
 \item The graded ring $K^{\MW}(k)$ has generators $[u]$ of degree $1$ for $u$ in $k^*$, and $\eta$ of degree $-1$. The element $\langle u \rangle$ in $\GW(k)$ corresponds to the element $1+ \eta[u]$.
 \item\label{K-1iso} There is a canonical isomorphism $K^{\MW}_{-1}(k) \cong W(k)$ \cite[Lemma 3.10]{morel}. Under this isomorphism $\eta f$ in $K^{\MW}_{-1}(k) $ corresponds to the image of $f$ under the quotient map $\GW(k) \to  \GW(k)/ \mathbb{Z} h \cong \W(k)$ for any $f$ in $\GW(k)$. 
\item\label{partialtmap} There is a map $$\partial_t: K^{\MW}_0(k(t)) \to K^{\MW}_{-1}(k) $$ corresponding to the local ring $k[t]_{(t)}$ with uniformizer $t$ \cite[Theorem 3.15]{morel}. This map has the following two properties:\begin{enumerate}
\item \label{partialtint} For any class $f$ in $\GW(k)$, the class $\partial f_{k(t)} \in \GW(k(t))$ is $0$. 
\item \label{partialt<t>} For any class $f$ in $\GW(k)$, the class $\langle t \rangle f_{k(t)}$ in $\GW(k(t)) \cong K^{\MW}_0(k(t))$ has image $\eta f$ in $K^{\MW}_{-1}(k(t)) $ under $\partial_t$, because $$\partial_t (\langle t \rangle f_{k(t)}) = \partial_t (1+ \eta [t]) f_{k(t)}) = \partial_t (f_{k(t)}) + \partial_t (\eta [t] f_{k(t)}) = \eta \partial_t ([t] f_{k(t)}) = \eta f.$$ Here we are considering $\GW(k)$ as the subring $\GW(\mathbb{A}^1_k) \subset \GW(k(t))$ of unramified elements of $\GW(k(t))$, or equivalently, we are identifying $\GW(k)$ with its image under the pullback map $\GW(k) \to \GW(k(t))$.
\end{enumerate} 
\end{enumerate}
 
\end{rmk}

\begin{proof}[Proof of Proposition~\ref{factoringalpha}]

Note that the set of isolated zeros of $\sigma \oplus \alpha \sigma'$ does not depend on $\alpha$ for any $\alpha$ in $k^*$. Let $x$ be such a zero of $\sigma \oplus \sigma'$. We first show that \begin{equation}\label{ind_scales_with_section}\ind_x (\sigma \oplus \alpha \sigma') = \langle \alpha \rangle^{r'}\ind_x (\sigma \oplus \sigma')\end{equation}  for any $\alpha \in k^*$ and then use this to show that $e$ is an integer multiple of $h$..

When $x$ is a simple zero with separable residue field, the local index $\ind_x (\sigma \oplus \sigma') $ is computed by Proposition~\ref{CubicSurfacetrJprop}, which was originally \cite[Proposition 32]{CubicSurface}. Recall that this computation is accomplished by considering the section $\sigma \oplus \sigma'$ locally to be a function $\bbb{A}^n \to \bbb{A}^n$ and the index is the Jacobian determinant. Replacing $\sigma \oplus \sigma'$ by $\sigma \oplus \alpha \sigma'$ has the effect of multiplying the last $r'$ coordinate projections of the associated function $\bbb{A}^n \to \bbb{A}^n$ by $\alpha$, which in turn scales the Jacobian determinant that computes the local index by $\alpha^{r'}$.

In general, choose an open neighborhood $U$ of $x$ with Nisnevich local coordinates $\phi: U \to \Spec k[x_1, x_2, \ldots, x_n]$ near $x$. Choose local trivializations $\psi: \calE \vert_U \to \calO_U^r$ and $\psi': \calE' \vert_U \to \calO_U^{r'}$ such that $\psi \oplus \psi'$ and $\phi$ are compatible with the relative orientation. Then $\psi \sigma = (f_1, \ldots, f_r)$ and $\psi \sigma' = (f_1', \ldots, f_r')$ where $f_i$ and $f_i'$ are in $\calO(U)$. We choose $g_i$ for $i = 1,\ldots, r$ in $k[x_1, \ldots, x_n]$ and $g_i'$ in $k[x_1, \ldots, x_n]$ for $i = 1,\ldots, r'$  such that $f_i - \phi^* g_i$ and $f_i' - \phi^* g_i'$ are in a sufficiently high power of $m_x$. For notational convenience, define $(h_1, \ldots h_{r+r'})$ by $(h_1, \ldots h_{r+r'})=(g_1, \ldots, g_r, g_1',\ldots,g_r')$. Then $(h_1, \ldots h_{r+r'})$ defines the complete intersection $$\calO_{Z,x} \cong k[x_1,\ldots,x_n]_{m_{\phi(x)}}/\langle h_1, \ldots, h_{r+r'} \rangle$$ and $\ind_x (\sigma \oplus \alpha \sigma')$ is represented by the bilinear pairing on $\calO_{Z,x}$ $$(a,b) \mapsto \zeta(ab),$$ where $\zeta$ is the $k$-linear map $\zeta: \calO_{Z,x} \to k$ constructed in \cite[p. 182]{scheja}. (Note that \cite{scheja} use $\eta$ for their linear map, which we have replaced with $\zeta$ to avoid confusion with the Hopf element $\eta \in K^{\MW}(k)$.) 

The map $\zeta$ is defined as follows. Choose $a_{ij}$ in $k[x_1, \ldots, x_n] \otimes_k k[x_1, \ldots, x_n]$ such that $h_j \otimes 1 - 1 \otimes h_j = \sum_{ij} a_{ij}(x_i \otimes 1 - 1 \otimes x_i)$. Define $\Delta$ in $\calO_{Z,x} \otimes_k \calO_{Z,x}$ to be the image of $\det(a_{ij})$. Let $\chi$ be the canonical map $$\chi: \calO_{Z,x} \otimes_k \calO_{Z,x} \to  \Hom_k(\Hom_k(\calO_{Z,x}, k), \calO_{Z,x})$$ $$ a \otimes a' \mapsto (\gamma \mapsto \gamma(a) a').$$  The map $\zeta$ is the inverse image of $1$ under $\chi(\Delta)$. It is a result of Scheja and Storch that this is well-defined. In particular, if $\Delta$ is multiplied by $\alpha$ in $k^*$, then $\zeta$ is multiplied by $\alpha^{-1}$, and this is the only property of $\zeta$ we will need.

Note that $\Delta$ is multiplied by $\alpha$ when $h_i$ replaced by $\alpha h_{i}$ for some fixed $i$. Changing the section $\sigma \oplus \sigma'$ to $\sigma \oplus \alpha \sigma'$ changes each $f_i'$ to $\alpha f_i'$ and leaves the $f_i$ fixed. We may therefore choose new $g_i$ and $g_i'$ by leaving the $g_i$ fixed and changing $g_i'$ to $\alpha g_i'$. Therefore new $a_{ij}$ can be defined by keeping $a_{ij}$ the same for $j \leq r$ and changing $a_{ij}$ to $\alpha a_{ij}$ for $j = r+1, \ldots, r+r'$. Therefore the element $\Delta_{\alpha}$ for $\sigma \oplus \alpha \sigma'$ is $\Delta_{\alpha} = \alpha^{r'}\Delta$. Thus the map $\zeta_{\alpha}$ for the section $\sigma \oplus \alpha \sigma'$ is $\zeta_{\alpha} = \alpha^{-r'} \zeta$, giving \eqref{ind_scales_with_section}.

For a field extension $k \subseteq L$, the Euler number $e(\calE_L \oplus \calE'_L)$ in $\GW(L)$ is the pullback of $e(\calE \oplus \calE')$ in $\GW(k)$ by functoriality, i.e. $e(\calE_L \oplus \calE'_L) = e(\calE \oplus \calE') \otimes_k L$.

Furthermore, for any $\alpha$ in $L^*$, we have $e(\calE_L \oplus \calE'_L) = e(\calE_L \oplus \calE'_L, \sigma \oplus  \alpha \sigma')$ in $\GW(L)$ by \cite[Theorem 1.1]{BW-Euler}. See also Remark \ref{rmk:hypothesis_timing}. Thus $$e(\calE_L \oplus \calE'_L)=e(\calE_L \oplus \calE'_L, \sigma \oplus  \alpha \sigma') = \sum_{x: \sigma(x)=0, \sigma'(x)=0} \ind_x (\sigma \oplus \alpha \sigma').$$ Since $r'$ is odd, we have $$\ind_x (\sigma \oplus \alpha \sigma') = \langle \alpha \rangle^{r'}\ind_x (\sigma \oplus \sigma') = \langle \alpha \rangle \ind_x  (\sigma \oplus \sigma'),$$ by \eqref{ind_scales_with_section}. It follows that \begin{equation}\label{<a>e=e} e(\calE_L \oplus \calE'_L)=  \langle \alpha \rangle e(\calE_L \oplus \calE'_L).\end{equation} for all $\alpha$ in $L^*$.

To simplify notation, let $e =  e(\calE \oplus \calE')$ in $\GW(k)$ and let $e_L = e \otimes_k L$ be the pullback to $\GW(L)$. So \eqref{<a>e=e} says that $\langle \alpha \rangle e_L = e_L$ for all $\alpha$ in $L^*$.

We now show that $e$ is an integer multiple of $h$. By \cite[IX Milnor's Theorem 3.1 p. 306]{lam05}, it suffices to show that $e_{k(t)}$ is an integer multiple of $h$. By Remark~(\ref{GWfacts})(\ref{partialtmap}), it follows that 
$$0 = \partial_t (e_{k(t)} ) = \partial_t (\langle t \rangle e_{k(t)}) = \eta (e),$$ where the first equality is \eqref{partialtint}, the second is  \eqref{<a>e=e}, and the third is \eqref{partialt<t>}. By Remark~(\ref{GWfacts})(\ref{K-1iso}) $e_{k(t)}$ is $0$ in $\W(k)$, whence a multiple of $h$ and the claim follows.  

\end{proof}

\begin{co}\label{enrichedCatalan}
 \[  e(\calV) = \frac{1}{2} \frac{(2n-2)!}{n! (n-1)!} h .\]
\end{co}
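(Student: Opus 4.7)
The plan is to combine Proposition~\ref{factoringalpha} with a rank computation. Write $\calV = \calE \oplus \calE'$ where $\calE = \oplus_{i=1}^{2n-3}\Lambda^2\calS^*$ has rank $r = 2n-3$ and $\calE' = \Lambda^2\calS^*$ has rank $r' = 1$. Then $r'$ is odd and $r + r' = 2n-2 = \dim\Gr(2,n+1)$, so the numerical hypotheses are met. Take sections $\sigma$ of $\calE$ and $\sigma'$ of $\calE'$ so that $\sigma \oplus \sigma'$ lies in the open subset $U$ of Corollary~\ref{e-well-defined}. For any $\alpha \in k^*$, the sections $\sigma'$ and $\alpha\sigma'$ cut out the same hyperplane in $\check{\bbb{P}^N}$, so $\sigma\oplus \alpha\sigma'$ remains in $U$; by Corollary~\ref{e-well-defined}, $e(\calV, \sigma\oplus\alpha\sigma')$ is independent of $\alpha$. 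Zariski (hence Nisnevich) local coordinates at each isolated zero are supplied by Lemma~\ref{lm:label_oriented_coordinates_Gr}. All the hypotheses of Proposition~\ref{factoringalpha} are therefore satisfied, and we conclude that $e(\calV) = N\,h$ for some integer $N$.

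To pin down $N$, I would apply the rank homomorphism $\mathrm{rank}\colon \GW(k) \to \mathbb{Z}$, which sends $h$ to $2$, so $2N = \mathrm{rank}\, e(\calV)$. The rank of $e(\calV)$ is preserved under base change to an algebraic closure of $k$; by Proposition~\ref{CubicSurfacetrJprop}, at each simple zero $L$ of $\sigma$ the index has rank $[k(L):k]$, so $\mathrm{rank}\, e(\calV)$ equals the classical count over an algebraically closed field of characteristic zero of lines meeting $2n-2$ general codimension-two linear subspaces in $\bbb{P}^n$. Since $c_1(\Lambda^2\calS^*)$ is the Pl\"ucker hyperplane class $\sigma_1$ on $\Gr(2,n+1)$, the top Chern class of $\calV$ is $\sigma_1^{2n-2}$, and a standard Schubert-calculus computation gives $\int_{\Gr(2,n+1)} \sigma_1^{2n-2} = \frac{(2n-2)!}{n!(n-1)!}$. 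Dividing by $2$ yields the desired value $N = \tfrac{1}{2}\tfrac{(2n-2)!}{n!(n-1)!}$.

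The main point that requires verification is the independence-of-$\alpha$ hypothesis of Proposition~\ref{factoringalpha}, but this reduces to the geometric observation that rescaling a section of a line bundle does not move its vanishing locus, together with Corollary~\ref{e-well-defined}. The integrality of $N$ (equivalently, divisibility of the Catalan-type number $\tfrac{(2n-2)!}{n!(n-1)!}$ by $2$ for $n$ odd) is a byproduct of the first step rather than an additional obstacle, since the argument forces $e(\calV)$ to be an integer multiple of $h$ before the rank computation is carried out.
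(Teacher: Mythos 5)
Your proof takes essentially the same route as the paper's, which also deduces from Proposition~\ref{factoringalpha} that $e(\calV)$ is an integer multiple of $h$ and then pins down the multiple by the rank (citing \cite[Proposition~4.12]{EisenbudHarris} for the Catalan-type count $\tfrac{(2n-2)!}{n!(n-1)!}$). Your write-up simply makes explicit the decomposition $\calV = \calE \oplus \calE'$ and the verification of the independence-of-$\alpha$ hypothesis via Corollary~\ref{e-well-defined}, details the paper leaves implicit.
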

\begin{proof}
By Proposition \ref{factoringalpha}, $e(\calV)$ is a multiple of $h$. By Lemma \ref{lm:rk_sgn_e}, the rank of $e(\calV)$ is the topological Euler number $e_{\C}$ of $\calV(\C)$. This topological Euler number is $\tfrac{(2n-2)!}{n! (n-1)!}$. See for example \cite[Proposition~4.12]{EisenbudHarris}.
\end{proof}

We now give an alternate, more explicit proof of this calculation when $n=4$ using the local index calculations from Section~\ref{LinearAlgebraFormula}. The explicit form of the calculation will be used in Theorem~\ref{thmcrossratio}.

\begin{pr}\label{explicitEulernumber}
 Let $\calS$ be the tautological bundle of $\Gr(2,4)$. Let $\calV = \oplus_{i=1}^{4} \Lambda^2(\calS^*)$. Then $e(\calV) = h$.
\end{pr}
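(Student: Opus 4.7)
The plan is to choose an explicit section $\sigma$ of $\calV=\oplus_{i=1}^{4}\Lambda^{2}\calS^{*}$ lying in the open set $U$ of Corollary~\ref{e-well-defined}, compute the local $\bbA^{1}$-index at each zero using Proposition~\ref{indexformula}, and add. By Corollary~\ref{e-well-defined}, the resulting element of $\GW(k)$ equals $e(\calV)$, independently of the choice of $\sigma$, so any convenient section will do.

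Concretely, I would pick four codimension-two subspaces of $k^{4}$ whose defining linear forms are as sparse as possible, for instance $\alpha_{1}\wedge\beta_{1}=\phi_{3}\wedge\phi_{4}$, $\alpha_{2}\wedge\beta_{2}=\phi_{1}\wedge\phi_{2}$, $\alpha_{3}\wedge\beta_{3}=(\phi_{3}-\phi_{1})\wedge(\phi_{4}-\phi_{2})$, and $\alpha_{4}\wedge\beta_{4}=(\phi_{3}-2\phi_{1})\wedge(\phi_{4}+\phi_{2})$. The first three cut out pairwise skew lines in $\bbP^{3}_{k}$, so the family of common transversals is $[a\!:\!b]\in\bbP^{1}\mapsto\Span(ae_{1}+be_{2},\,ae_{3}+be_{4})$; imposing incidence with the fourth subspace is a single Pl\"ucker-type determinant whose vanishing reduces (after an elementary $4\times 4$ computation) to the equation $ab=0$. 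Hence the zeros of $\sigma$ are the two $k$-rational lines $M_{1}=\Span(e_{2},e_{4})$ and $M_{2}=\Span(e_{1},e_{3})$, and the finiteness of the zero locus already certifies $\sigma\in U$.

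At each $M_{j}$ I would relabel the standard basis so that $M_{j}$ plays the role of $\Span(e_{n},e_{n+1})$ in the convention of Proposition~\ref{indexformula} (here $n=3$), re-express each $\alpha_{i}$ and $\beta_{i}$ in the new dual basis, and read off the entries of the $4\times 4$ matrix produced by that proposition. Because four of the eight resulting linear forms involve only two dual-basis vectors apiece, each matrix is sparse and an iterated cofactor expansion along the first column reduces its determinant to a single $2\times 2$ evaluation. The expected output is Jacobian determinants $J_{1},J_{2}\in k^{*}$ with $\langle J_{1}\rangle=\langle a\rangle$ and $\langle J_{2}\rangle=\langle -a\rangle$ for some nonzero $a\in k$; nonvanishing of each $J_{j}$ simultaneously confirms that both zeros are simple.

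Adding, $e(\calV)=\ind_{M_{1}}\sigma+\ind_{M_{2}}\sigma=\langle a\rangle+\langle -a\rangle=\langle 1\rangle+\langle -1\rangle$ in $\GW(k)$, using the standard identity $\langle a\rangle+\langle -a\rangle=h$ valid over any field of characteristic not $2$ via the substitution $u=x+y$, $v=a(x-y)$. The main bookkeeping obstacle is verifying that the local trivialization of each summand of $\calV$ by $\tilde\phi_{n}\wedge\tilde\phi_{n+1}$ and the local coordinates of Lemma~\ref{lm:label_oriented_coordinates_Gr} are simultaneously compatible with the chosen orientation, so that the determinant output by Proposition~\ref{indexformula} really is the Jacobian up to a square; the odd parity of $n=3$ makes both compatibilities automatic, exactly as in the proof of that lemma, so this amounts only to recording the compatibility before extracting $\langle J_{j}\rangle$.
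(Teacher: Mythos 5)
Your overall strategy matches the paper's: invoke Corollary~\ref{e-well-defined} to pick a convenient $k$-rational section with two $k$-rational simple zeros, compute each local index via Proposition~\ref{indexformula}, and conclude from $\langle a\rangle + \langle -a\rangle = \langle 1\rangle + \langle -1\rangle$. Your explicit choice of $\pi_1,\ldots,\pi_4$ is sound: the determinant expressing incidence of $\Span(ae_1+be_2,\,ae_3+be_4)$ with $\pi_4$ is $-3ab$, so the zeros are indeed $M_1=\Span(e_2,e_4)$ and $M_2=\Span(e_1,e_3)$.

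The genuine gap is that the decisive step is left as an assertion: ``The expected output is Jacobian determinants $J_1,J_2\in k^*$ with $\langle J_1\rangle=\langle a\rangle$ and $\langle J_2\rangle=\langle -a\rangle$.'' This is exactly what must be proved, and you neither compute the two $4\times 4$ determinants nor give a reason the signs must oppose each other; had the two determinants turned out to represent, say, $\langle 1\rangle + \langle 2\rangle$, the proof would collapse. If you do carry out the computation for your example (relabeling the basis as you describe so that each $M_j$ becomes $\Span(e_3,e_4)$ and substituting into the matrix of Proposition~\ref{indexformula}), you get $J=-3$ at $M_1$ and $J=3$ at $M_2$, so the claim is true here — but that fact should be demonstrated, not expected.

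The paper sidesteps the arithmetic altogether with a structural argument you might prefer: choose the two zeros to be $\Span(e_1,e_2)$ and $\Span(e_3,e_4)$ and use the \emph{incidence constraints themselves} to normalize the defining forms so that $a_{i3}=a_{i4}=0$ and $b_{i1}=b_{i2}=0$ for all $i$. Once that normalization is in place, the two $4\times 4$ matrices produced by Proposition~\ref{indexformula} are visibly related by one row swap plus two row negations — an odd number of sign changes — so $J_{L}=-J_{L'}$ identically, independent of the chosen coefficients. This makes the relation $\ind_{L}\sigma=\langle -1\rangle\ind_{L'}\sigma$ a forced symmetry rather than a numerical coincidence, and it is precisely this insight your proposal lacks. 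Either finish the two determinant evaluations explicitly, or (better) adopt the normalization-plus-row-operations argument so that no arithmetic is needed.
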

\begin{proof}
 As in Section~\ref{LinearAlgebraFormula}, we will compute this Euler characteristic by adding up local contributions from the explicit section $\sigma$ of $\calV$ coming from four lines in $\bbP^3_k$. By Corollary \ref{e-well-defined}, we may choose a $k$-rational section $\sigma$ with only isolated zeros for this computation. We may moreover assume that the two lines corresponding to the two zeroes of $\sigma$ are $k$-rational, by explicit example (see for instance Section~\ref{example}). Classical arguments (see for instance \cite[Section~3.4.1]{EisenbudHarris}) show that these two lines can be taken to be two skew lines in the same ruling of a quadric surface in $\bbP^3_k$, so after a change of coordinates, we 
may assume that the two zeroes of $\sigma$ are the lines corresponding to the subspaces $L' \colonequals \Span(e_1,e_2)$ and $L \colonequals \Span(e_3,e_4)$ of $k^4$.
 
 We will now show that a careful choice of $\alpha_i,\beta_i$ (which in turn determine the section $\sigma$), and local coordinates around $L'$ and $L$ lets us show that the matrices computing $\ind_{L'} \sigma$ and $\ind_{L} \sigma$ are related by row operations and sign swaps.
 
 For any index $i$, since $L = \Span(e_3,e_4)$ intersects $L_i$, it follows that $\{ \phi_1,\phi_2,\alpha_i,\beta_i \}$ are linearly dependent. In terms of the chosen basis expansions for $\alpha_i$ and $\beta_i$, this translates to
 \[ \det \begin{bmatrix} a_{i3} & a_{i4} \\ b_{i3} & b_{i4}  \end{bmatrix} = 0 .\]
 Since we may replace the pair $(\alpha_,\beta_i)$ by any pair of linearly independent vectors in $\Span(\alpha_i,\beta_i)$, by either adding a multiple of $\beta_i$ to $\alpha_i$ or by swapping $\alpha_i$ and $\beta_i$, we may assume that $a_{i3} = a_{i4} = 0$ without any loss of generality.
 Similarly from the condition that $L' = \Span(e_1,e_2)$ intersects $L_i$, it follows that \[ \det \begin{bmatrix} a_{i1} & a_{i2} \\ b_{i1} & b_{i2}  \end{bmatrix} = 0 ,\]
 and as before, we can use this relation and further change $\alpha_i,\beta_i$ to assume that $b_{i1} = b_{i2} = 0$.
 
 With these choices, we get 
 \[ \ind_{L} \sigma = \left\langle \det \begin{bmatrix}
 \cdots & a_{i1}b_{i4} & \cdots & \\ \cdots & a_{i2}b_{i4} & \cdots \\  \cdots & -a_{i1}b_{i3} & \cdots \\ \cdots & -a_{i2}b_{i3} & \cdots
\end{bmatrix} \right\rangle .\]

To compute $\ind_{L'} \sigma$, we need to choose local coordinates in $\Gr(2,4)$ around this point, and a local trivialization of $\Lambda^2(\calS^*)$ compatible with the chosen orientation. By Lemma \ref{lm:label_oriented_coordinates_Gr}, if we set \[ f_1=e_3,f_2=e_4, f_3=e_1,f_4=e_2, \]
and define $\{\tilde{f}_{1}, \tilde{f}_{2},\tilde{f}_{3},\tilde{f}_{4} \}$ by the formula $$\tilde{f}_i = \begin{cases} f_i  &\mbox{for } i = 1,2 \\ 
 x_1 f_1 + x_2 f_2 + f_{3} & \mbox{for } i=3
\\ 
y_1 f_1 + y_2 f_2 + f_{4} & \mbox{for } i=4, \end{cases} $$
then $x_1,x_2,y_1,y_2$ gives us local coordinates around $L' = \Span(f_3,f_4) = \Span(e_1,e_2)$.
Let $\{\tilde{\psi}_{1},\tilde{\psi}_{2},\tilde{\psi}_{3}, \tilde{\psi}_{4} \}$ be the dual basis for the basis $\{\tilde{f}_{1}, \tilde{f}_{2}, \tilde{f}_{3},\tilde{f}_{4} \}$ of $k^{n+1}$. Then as before it follows that $\tilde{\psi}_3 \wedge \tilde{\psi}_4$ gives a trivialization of $\Lambda^2(\calS^*)$ around $\Span(e_1,e_2)$ compatible with the chosen orientation and hence also a trivialization of $\calV$. With these choices, if we now redo the index calculation in Proposition~\ref{indexformula}, the formula~\ref{indLsimgafirstdet} simplifies to

\[ \ind_{L'} \sigma = \left\langle \det \begin{bmatrix} \cdots & -a_{i2}b_{i3} & \cdots \\ \cdots & -a_{i2}b_{i4} & \cdots \\   \cdots & a_{i1}b_{i3} & \cdots \\
 \cdots & a_{i1}b_{i4} & \cdots & 
\end{bmatrix} \right\rangle.\]
From these explicit formulae, we see that the matrices computing $\ind_{L'} \sigma$ and $\ind_{L} \sigma$ are related by swapping the first and fourth rows, and by multiplying each of the second and third rows by $-1$. This in turn implies that if $\ind_{L'} \sigma = \langle c \rangle$, then $\ind_{L} \sigma = \langle -c \rangle$, and therefore $e(\calV) = \langle c \rangle + \langle -c \rangle = \langle 1 \rangle + \langle -1 \rangle$.
\end{proof}

\section{The local index in terms of cross ratios}\label{crossratio}

\begin{df}\label{defcrossratio}
 Let $L_1,L_2,L_3,L_4$ be four lines in $\bbP^3_k$ and let $L$ be another line in $\bbP^3$ that meets all four lines. Assume that the four intersection points of $L$ with the $L_i$ are distinct. Also assume that the four planes spanned by $L$ and the $L_i$ are distinct. Let $\lambda_{L}$ be the cross ratio of the four points $L \cap L_1,L \cap L_2, L \cap L_3, L \cap L_4$ on the line $L$. Let $\mu_L$ be the cross ratio of the four planes containing $L$ and each of the four lines $L_1,L_2,L_3,L_4$ in the $\bbP^1_k$ of planes containing $L$ in $\bbP^3_k$. 
\end{df}

The goal of this section is to prove the following theorem.

\begin{tm}\label{thmcrossratio}
Let $\calV = \oplus_{i=1}^{4} \Lambda^2 \calS^*$. Let $L_1,L_2,L_3,L_4$ be four lines in $\bbP^3_k$ such that
\begin{itemize}
 \item the corresponding section $\oplus_{i=1}^4 \alpha_i \wedge \beta_i$ has only simple zeroes,
 \item for any line $L$ meeting all four lines, the four intersection points $L \cap L_i$ are pairwise distinct, and,
 \item for any line $L$ meeting all four lines, the four planes spanned by $L$ and each of the $L_i$ are pairwise distinct.
\end{itemize}
The locus of such lines is an open dense subset of $\Gr(2,4)^4$. Let $\lambda_L,\mu_L$ be the associated cross ratios as in Definition~\ref{defcrossratio}. There exists a section $\sigma$ of $\calV$ such that for all $L$ meeting $L_1,L_2,L_3,L_4$, we have
 \[ \ind_{L} \sigma = \Tr_{k(L)/k} \langle \lambda_L-\mu_L \rangle \]
\end{tm}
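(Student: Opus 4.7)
By Proposition~\ref{geometric-interpretation}, for any section $\sigma=\oplus_{i=1}^4 \alpha_i\wedge\beta_i$ of $\calV$ having only simple zeros with $k\subseteq k(L)$ separable, one has $\ind_L\sigma = \Tr_{k(L)/k}\langle i(L)\rangle$, where $i(L)$ is the $4\times 4$ determinant from Equation~\eqref{i(L)def} computed in normalized coordinates. The plan is to show that a suitable rescaling of the data $\alpha_i\wedge\beta_i$ produces a section $\sigma$ with $\langle i(L)\rangle = \langle \lambda_L-\mu_L\rangle$ in $\GW(k(L))$ for every line $L$ meeting $L_1,\ldots,L_4$.

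The central calculation is to expand $i(L)=\det(d_i\otimes c_i)$ explicitly. Writing $[ij]=c_{i0}c_{j1}-c_{i1}c_{j0}$ and $\{ij\}=d_{i0}d_{j1}-d_{i1}d_{j0}$ for the $2\times 2$ minors associated with the normalized coordinates, I would expand the Leibniz sum, grouping the $4!$ permutations according to which two columns contribute to the top two rows of the matrix. Collecting terms and applying the Pl\"ucker relation $\{12\}\{34\}-\{13\}\{24\}+\{14\}\{23\}=0$ (and its analogue for the $[ij]$) collapses the expression to
\[ i(L) \;=\; \{13\}\{24\}[14][23] \;-\; \{14\}\{23\}[13][24]. \]
Since the cross-ratios are $\lambda_L=[13][24]/[14][23]$ and $\mu_L=\{13\}\{24\}/\{14\}\{23\}$, this rearranges to
\[ i(L) \;=\; -\,(\lambda_L-\mu_L)\cdot [14][23]\{14\}\{23\}. \]

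It remains to arrange, by the choice of $\sigma$, that the discrepancy $-[14][23]\{14\}\{23\}$ is a square in $k(L)^*$ for every admissible $L$. The key intrinsic identity is that, in any basis $\{\phi_l\}$ adapted to the splitting $k^4=W\oplus W'$ at $L$, the four-form satisfies
\[ \alpha_i\wedge\beta_i\wedge\alpha_j\wedge\beta_j \;=\; -\,[ij]\{ij\}\;\phi_1\wedge\phi_2\wedge\phi_3\wedge\phi_4, \]
which follows because the dominant contribution to the wedge product comes from $d_i\wedge c_i\wedge d_j\wedge c_j$, the other cross-terms vanishing for dimension reasons on $W$ and $W'$. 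Under a change of adapted basis, $[ij]\{ij\}$ rescales by the determinant of the change-of-basis matrix, so the product $[14][23]\{14\}\{23\}$ rescales by its square. Hence its class in $k(L)^*/(k(L)^*)^2$ is basis-independent and agrees, after base change, with the class of the $k$-rational quantity $\det(\alpha_1,\beta_1,\alpha_4,\beta_4)\cdot\det(\alpha_2,\beta_2,\alpha_3,\beta_3)\in k^*$ computed in the fixed standard basis. Since rescaling $\alpha_i\wedge\beta_i\mapsto t_i\alpha_i\wedge\beta_i$ with $t_i\in k^*$ multiplies this class by $t_1t_2t_3t_4$, one can always select the $t_i$ so that the product becomes the negative of a square in $k^*$. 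With this rescaled section $\sigma$ we have $-[14][23]\{14\}\{23\}\in(k(L)^*)^2$ for every $L$ meeting $L_1,\ldots,L_4$, and hence $\langle i(L)\rangle=\langle \lambda_L-\mu_L\rangle$ as desired.

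The main obstacle is bookkeeping the sign and basis-change conventions carefully -- both in arriving at the factored formula for $i(L)$ via the Pl\"ucker expansion, and in the intrinsic identification of $[ij]\{ij\}$ via the four-form, where the transition from $\alpha_i\wedge\beta_i\wedge\alpha_j\wedge\beta_j$ to $d_i\wedge d_j\wedge c_i\wedge c_j$ must be handled with care. Once these are aligned, the underlying combinatorial fact -- that four points of $\mathbb{P}^1\times\mathbb{P}^1$ lie on a $(1,1)$-curve exactly when the two corresponding cross-ratios agree -- is what makes the factorization $i(L)=-(\lambda_L-\mu_L)\cdot[14][23]\{14\}\{23\}$ possible in the first place.
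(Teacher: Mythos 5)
Your proposal is correct in structure and takes a genuinely different route than the paper. The paper's proof proceeds by choosing coordinates to move the intersection points and spanning planes to standard positions $[1{:}0],[1{:}1],[0{:}1],[1{:}\lambda]$ (via a block change of basis in $\GL_2\times\GL_2$), then invoking Proposition~\ref{indexformula} to reduce $\ind_L\sigma$ to an explicit $4\times 4$ determinant that literally evaluates to $c(\lambda_L-\mu_L)$; the overall constant $c$ is scaled away, and the matching at the second zero $L'$ is arranged by appealing to the $\langle-1\rangle$-relation extracted from the proof of Proposition~\ref{explicitEulernumber} together with the observation $\lambda_{L'}=\mu_L$, $\mu_{L'}=\lambda_L$. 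Your approach instead factors $i(L)$ abstractly: you expand $\det(d_i\otimes c_i)$ using Pl\"ucker relations to get $i(L)=\{13\}\{24\}[14][23]-\{14\}\{23\}[13][24]=-(\lambda_L-\mu_L)\,[14][23]\{14\}\{23\}$, identify the discrepancy factor $[ij]\{ij\}$ intrinsically via the $4$-form identity $\alpha_i\wedge\beta_i\wedge\alpha_j\wedge\beta_j=-[ij]\{ij\}\,\phi_1\wedge\cdots\wedge\phi_4$ (in an adapted basis), and observe that the resulting discrepancy is, up to squares in $k(L)^\times$, the $k$-rational quantity $\det(\alpha_1,\beta_1,\alpha_4,\beta_4)\det(\alpha_2,\beta_2,\alpha_3,\beta_3)$, independent of $L$. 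It can therefore be killed uniformly for all zeros $L$ by a single rescaling $\alpha_i\wedge\beta_i\mapsto t_i\alpha_i\wedge\beta_i$. This is more structural than the paper's hands-on normalization: it explains why a cross-ratio difference appears, and the $k$-rationality of the discrepancy handles the consistency of the constant across all (possibly nonrational) zeros at once, replacing the paper's explicit $\langle-1\rangle$ bookkeeping between $L$ and $L'$.

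Two items should be filled in to make this a complete proof. First, the central Pl\"ucker expansion of $\det(d_i\otimes c_i)$ is asserted rather than carried out; it is in fact correct (I checked it both against a numerical example and by verifying the sign under the transpositions $(12)$, $(34)$, $(13)$ using the three-term Pl\"ucker relations), but it needs to be written out. Second, the cross-ratio convention must be pinned down: your $\lambda_L=[13][24]/([14][23])$ and $\mu_L=\{13\}\{24\}/(\{14\}\{23\})$ are a specific one of the six cross ratios, so you need to fix that same convention in Definition~\ref{defcrossratio} (or note that a different consistent convention changes $\lambda_L-\mu_L$ only in a way that is accounted for by the rescaling freedom). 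The $4$-form identity and the square-class invariance of $[14][23]\{14\}\{23\}$ under change of adapted basis both hold, and your dimension-count argument for why the $(k^4/W)^*\wedge(k^4/W)^*$ cross-terms drop out of the quadruple wedge is correct: $\omega_i\wedge\omega_j$ and $\omega_i\wedge(d_j\wedge c_j)$ land in $\Lambda^{\geq 3}$ of a two-dimensional space.
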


\begin{proof} We first justify that the locus of lines satisfying the conditions above is an open dense subset of $\Gr(2,4)^4$. The locus of sections with simple zeroes is the locus where the corresponding Jacobian determinant appearing in Proposition~\ref{CubicSurfacetrJprop} is nonzero in the residue field, which is an open condition. The condition that the four intersection points $L \cap L_i$ are pairwise distinct is implied by the condition that the four lines are pairwise non-intersecting, which is manifestly an open condition. Finally, note that if the four lines are pairwise non-intersecting, the planes spanned by $L$ and each of the $L_i$ are also distinct, so the third condition follows as well. This means the cross ratios in the statement of the theorem are well-defined and we may use Proposition~\ref{CubicSurfacetrJprop}(originally from \cite{CubicSurface}) to compute the local index. More precisely, it is enough to show that $\ind_{L} \sigma$ computed over the field of definition of the line $k(L)$ equals $\langle \lambda_L-\mu_L\rangle$.
 
 Recall that the line $L_i$ is cut out by the two hyperplanes $\alpha_i = \sum_j a_{ij} {\phi}_j$ and $\beta_i = \sum_j b_{ij} {\phi}_j$. As in the proof of Proposition~\ref{explicitEulernumber}, we may assume that the two lines meeting these four lines are $L = \Span(e_3,e_4)$ and $L' = \Span(e_1,e_2)$ and that $a_{i3} = a_{i4} = b_{i1} = b_{i2} = 0$ without any loss of generality. We have explicit projective coordinates $[0:0:z_3:z_4]$ on $L = \Span(e_3,e_4)$ induced from the projective coordinates $[z_1:z_2:z_3:z_4]$ on $\bbP^3_k$. In these coordinates, the point $L \cap L_i$ is $[0:0:-b_{i4}:b_{i3}]$. Since cross ratios are invariant under automorphisms of $L$ and since $[0:0:z_3:z_4] \mapsto [0:0:-z_4:z_3]$ is an automorphism of $L$, it follows that $\lambda_L$ is the cross ratio of the four points $[b_{i3}:b_{i4}]$ for $i=1,2,3,4$. Similarly, with $L' = \Span(e_1,e_2)$, we have that $\lambda_{L'}$ is the cross ratio of the four points $[a_{i1}:a_{i2}]$.
 
 We will now pick explicit coordinates $[w_1:w_2]$ on the $\bbP^1_k$ of 2-planes containing $L = \Span(e_3,e_4)$ in $\bbP^3_k$ to compute $\mu_L$. The isomorphism is given by mapping $[w_1:w_2]$ to the 2-plane $\Span(e_3,e_4,-w_2e_1+w_1e_2)$ in $\bbP^3$. In these coordinates, the plane containing $L$ and $L_i$, namely $\Span(e_3,e_4,-a_{i2}e_1+a_{i1}e_2)$, is $[a_{i1}:a_{i2}]$. Therefore $\mu_L$ is the cross ratio of the four points $[a_{i1}:a_{i2}]$ for $i=1,2,3,4$. Similarly $\mu_{L'}$ is the cross ratio of the four points $[b_{i3}:b_{i4}]$ for $i=1,2,3,4$.
 
 Pick $A,B \in \GL_2(k)$ such that their respective images $\overline{A},\overline{B} \in \PGL(k)$ satisfy
 \begin{align*} 
 \overline{B}[b_{13}:b_{14}] &= [1:0] \quad \quad \quad &\overline{A}[a_{11}:a_{12}] &= [1:0] \\ 
 \overline{B}[b_{23}:b_{24}] &= [1:1] \quad \quad \quad &\overline{A}[a_{21}:a_{22}] &= [1:1] \\ 
 \overline{B}[b_{33}:b_{34}] &= [0:1] \quad \quad \quad &\overline{A}[a_{31}:a_{32}] &= [0:1] \\ 
 \overline{B}[b_{43}:b_{44}] &= [1:\lambda_L] \quad \quad \quad &\overline{A}[a_{41}:a_{42}] &= [1:\mu_L]. 
 \end{align*}
 Now change coordinates on the underlying $\bbP^3$ using the block matrix $\begin{bmatrix} A & 0 \\0 & B \end{bmatrix}$ in $\GL_4(k)$. In these new coordinates, applying Proposition~\ref{indexformula} we get
 \[ \ind_{L} \sigma = \left\langle c \det \begin{bmatrix}
0 & 1 & 0 & \lambda_L \\ 0 & 1 & 1 & \lambda_L \mu_L \\ -1 & -1 & 0 & -1 \\ 0 & -1 & 0 & -\mu_L
\end{bmatrix} \right\rangle  = \langle c(\lambda_L-\mu_L) \rangle,\]
where $c \in k$ is an overall nonzero constant coming from the fact that we have picked lifts $A$ and $B$ of the elements $\overline{A},\overline{B}$ in $\PGL_2(k)$. We can now eliminate $c$ by scaling the section $\sigma$ by $1/c$, since this scales the local index by the same factor.
 
Fix a section $\sigma$ such that $\ind_L \sigma = \langle \lambda_L-\mu_L \rangle$. We will show that we also have $\ind_{L'} \sigma = \langle \lambda_{L'}-\mu_{L'} \rangle$. This follows from the following two facts.
\begin{itemize}
 \item As justified in the previous paragraphs, replacing $L$ by $L'$ switches the two cross ratios, so we have $(\lambda_{L'} - \mu_{L'}) = (\mu_L-\lambda_L)$.
 \item The proof of  Proposition~\ref{explicitEulernumber} shows that $\ind_{L'} \sigma = \langle -1 \rangle \ind_{L} \sigma$. \qedhere
\end{itemize}
\end{proof}

\section{Proofs of Main Theorems and an example}\label{mainthm_Section}
\begin{proof} [Proof of Theorem~\ref{intro:4linesthm}]
 Corollary~\ref{e-well-defined} tells us that we may compute $e(\calV)$ by making any auxiliary choice of section $\sigma \in H^0(\Gr(2,n+1),\calV)(k) \setminus Z(k)$. We would like to use the explicit section $\sigma = (\alpha_1 \wedge \beta_1, \alpha_2 \wedge \beta_2), \alpha_3 \wedge \beta_3, \alpha_4 \wedge \beta_4)$ arising from four general lines $L_1,L_2,L_3,L_4$ in $\bbP^3_k$ for our computations. To be able to apply Proposition~\ref{explicitEulernumber}, which in turn relies on Proposition~\ref{CubicSurfacetrJprop}, we need the section $\sigma$ to have only simple zeroes. Further, to be able to interpret the local index in terms of cross-ratios as in Theorem~\ref{thmcrossratio}, we need the cross-ratios to be well-defined, which in turn needs the points of intersection of the simple zero $L$ with the four lines $L_i$ to be pairwise distinct, and the planes spanned by the simple zero $L$ and each of the four lines $L_i$ to also be pairwise distinct.  All of these conditions are satisfied by an open 
subset of lines $(L_1,L_2,L_3,L_4) \in \Gr(2,4)^4$. This open locus for instance contains the locus of four lines that are pairwise non intersecting, and such that the fourth line is not tangent to the unique quadric containing the first $3$. The proof of the theorem is now a direct application of Proposition~\ref{explicitEulernumber} and Theorem~\ref{thmcrossratio}. \qedhere
\end{proof}

 The proof of Theorem~\ref{intro:2n-2codim2hyperplanesthm} is similar to the proof of Theorem~\ref{intro:4linesthm}.

\begin{proof}[Proof of Theorem~\ref{intro:2n-2codim2hyperplanesthm}]
For an open subset of the product of Grassmannians parametrizing $2n-2$-tuples of codimension $2$ planes, the corresponding sections $\sigma = \oplus_{i=1}^{2n-2} \alpha_i \wedge \beta_i$ satisfies the condition that its zeroes are all isolated and simple (see Corollary~\ref{e-well-defined} and the definition just before Proposition~\ref{CubicSurfacetrJprop}). If we assume that either that $k$ is perfect or that $k(L)/k$ is separable for all zeroes $L$ of $\sigma$ so that Proposition~\ref{geometric-interpretation}  applies, then we may compute the local index over the field of definition $k(L)$ of the line and then apply $\Tr_{k(L)/k}$ to obtain the local index over $k$. The theorem now follows from Corollary~\ref{e-well-defined} and Corollary~\ref{enrichedCatalan}.
\end{proof}

\begin{proof}[Proof of Corollary~\ref{Fqcorthm4lines}]
By Theorem \ref{intro:4linesthm}, we have that $\Tr_{\bbb{F}_{q^2}/\bbb{F}_{q}} \langle \lambda_L - \mu_L \rangle$ is a square for $q \equiv 1 \mod 4$ and is a non-square for $ q \equiv 3 \mod 4$. Moreover, $\Disc (\Tr_{\bbb{F}_{q^2}/\bbb{F}_{q}} \langle a \rangle)$ is a non-square if $a$ is a square and a square if $a$ is a non-square  by, for example, \cite[II.2.3]{Conner_Perlis}.
\end{proof}

\subsection{Example}\label{example}                                                                                                                  
We conclude the paper with an explicit example of Theorem~\ref{intro:4linesthm}. Let
\begin{align*} L_1 &\colon [t:s] \mapsto [t:0:0:s] \\ L_2 &\colon [t:s] \mapsto [t:s:t:s] \\ L_3 &\colon [t:s] \mapsto [s+t:2s:2s+2t:s] \\ L_4 &\colon [t:s] \mapsto [3t:t:s:3s] \end{align*}
be the parametric equations of 4 lines in $\bbb{P}^3 = \Proj k[x,y,z,w]$. When the characteristic is not $2,3$ or $5$, these lines pairwise do not intersect and the first three lines lie on the quadric $xy-zw$. (In characteristic $3$ the lines $L_2$ and $L_3$ intersect, and we can instead work with the example where we replace $L_3$ alone by $[-(s+t):-s:s+t:s]$. Similarly, in characteristic $5$ the lines $L_3$ and $L_4$ intersect, and we can replace $L_4$ alone by $[-2t:t:2s:-s]$.) The parametric equations of the lines that meet all four lines are $L \colon [t:s] \mapsto [s:t:t:s]$ and $L' \colon [t:s] \mapsto [-s:t:-t:s]$, with
\begin{align*} 
 L_1 \cap L &= [1:0:0:1] \leftrightarrow [0:1] \quad \quad \quad &L_1 \cap L' &= [-1:0:0:1] \leftrightarrow [0:1] \\ 
 L_2 \cap L &= [1:1:1:1] \leftrightarrow [1:1] \quad \quad \quad &L_2 \cap L' &= [-1:1:-1:1] \leftrightarrow [1:1] \\ 
 L_3 \cap L &= [1:2:2:1] \leftrightarrow [2:1] \quad \quad \quad &L_3 \cap L' &= [-1:2:-2:1] \leftrightarrow [2:1] \\ 
 L_4 \cap L &= [3:1:1:3] \leftrightarrow [1:3] \quad \quad \quad &L_4 \cap L' &= [-3:-1:1:3] \leftrightarrow [-1:3]. 
 \end{align*}
The formula for the cross ratio of four points $z_1,z_2,z_3,z_4$ in $k$ is $\frac{(z_4-z_1)(z_2-z_3)}{(z_2-z_1)(z_4-z_3)}$. This leads to $\lambda_L = 1/3$ and $\lambda_{L'} = -1/5$. 

Now we choose explicit projective coordinates on the $\bbP^1_k$ of planes containing $L$ and the $\bbP^1_k$ of planes containing $L'$ in order to compute the cross ratios $\mu_{L}$ and $\mu_{L'}$. For the former, let $[t:s]$ be the coordinates of the $2$-plane $\Span ((1,0,0,1),(0,1,1,0),(t,s,0,0))$ in $\bbP^3$, and for the latter let it be the coordinates of the $2$-plane $\Span ((-1,0,0,1),(0,-1,1,0),(t,s,0,0))$. In these coordinates, we have
\begin{align*} 
 \Span(L,L_1) &= [1:0] \quad \quad \quad &\Span(L',L_1) &= [1:0] \\ 
 \Span(L,L_2) &= [1:-1]  \quad \quad \quad &\Span(L',L_2) &= [1:1] \\ 
 \Span(L,L_3) &= [1:-2] \quad \quad \quad &\Span(L',L_3) &= [1:2] \\ 
 \Span(L,L_4) &= [3:1] \quad \quad \quad &\Span(L',L_4) &= [3:1],. 
 \end{align*}
which leads to $\mu_L = -1/5 = \lambda_{L'}$ and $\mu_{L'} = 1/3 = \lambda_L$, and $\lambda_L-\mu_L = 8/15 = -( \lambda_{L'}-\mu_{L'})$. Finally we have $\langle 8/15 \rangle + \langle -8/15 \rangle = \langle 1 \rangle + \langle -1 \rangle$ in $\GW(k)$.

\appendix

%
%
%
%

\section{Enriched Local Indices for Non-Rational Sections \\\small by Borys Kadets, Padmavathi Srinivasan, Ashvin A.~Swaminathan, \\ Libby Taylor, and Dennis Tseng}
\subsection{Motivation}
To illustrate the goal of this section, consider the following example. Suppose we have four distinct lines $\pi_1,\ldots,\pi_4\in\bbb{P}^3$ such that the union $\pi_1\cup\cdots\cup \pi_4$ is defined over our base field $k$, but each individual line $\pi_i$ may not be. Equivalently, we have an \'{e}tale $k$-algebra $E$ of degree 4 over $k$, and a map $\Spec E\to \Gr(2,4)$ over $k$. We could extend $k$ and apply Theorem 1, but we would like to leverage the fact that the map $\Spec E\to \Gr(2,4)$ is defined over $k$ and obtain an invariant in $\GW(k)$.

As before, we let $\mathcal{S}$ be the tautological subbundle on $\Gr(2,4)$ and let $\mathcal{L}=\mathcal{S}^{*}\wedge \mathcal{S}^{*}$. Unlike in Section 2, we no longer have a canonical section of $\mathcal{L}^{\oplus 4}$ associated to our lines and our choice of their defining equations. However, the key idea is to notice that we do have a canonical section of the bundle  $\Res_{E/k} \mathcal{L}$, which is \emph{a priori} a twist of $\mathcal{L}^{\oplus 4}$ but in fact turns out to be isomorphic to $\mathcal{L}^{\oplus 4}$ over $k$, and we can compute the enriched Euler number of this canonical section by expressing it as a sum of local indices, just as was done in the proofs of Theorems~1 and~2. In what follows, we compute these local indices in a very general setting.

\subsection{Setup}

Let $k$ be a field, let $\ksep$ be a fixed separable closure of $k$, and let $E$ be an \'{e}tale $k$-algebra of degree $m$. Let $\calV$ be a vector bundle of rank $r$ on a smooth $k$-scheme $X$ of dimension $mr$ equipped with a relative orientation, and let $\sigma \in \calV_E(X_E)$ be a section.

\begin{df}
 Let $\Res_{E/k} \calV$ be the vector bundle (defined over $k$) whose sections on an open set $U$ are given by $\calV_E(U_E)$.
\end{df}
The section $\sigma$ induces a global section $\sigma_{\Res}$ of $\Res_{E/k} \calV$. There is a natural homomorphism $\varphi \colon E \rightarrow \End_k(\Res_{E/k} \calV)$ sending $e \in E$ to the map of multiplication by $e$, and an embedding $\tau\colon \calV \to \Res_{E/k} \calV$. We fix a choice of $k$-basis $\alpha_1 = 1, \alpha_2, \ldots,\alpha_{m}$ of $E$, which determines an isomorphism $\calV^m \rightarrow \Res_{E/k} \calV$ given by Lemma \ref{Isomor}.
\begin{lm}\label{Isomor}
	The map $\calV^m \rightarrow \Res_{E/k} \calV$ defined on sections by $(s_1,\ldots,s_m) \mapsto \sum_q \varphi(\alpha_{q}) \tau(s_q)$  is an isomorphism of vector bundles over $k$.
\end{lm}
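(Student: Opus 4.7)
The plan is to reduce this to a standard fact about Weil restriction: that the choice of a $k$-basis of $E$ trivializes $\Res_{E/k}$ applied to a free module. First I would note that both the source $\calV^m$ and the target $\Res_{E/k} \calV$ are locally free $\calO_X$-modules of rank $mr$ (the source by construction; the target because Weil restriction of a rank-$r$ bundle along a finite \'etale $k$-algebra of degree $m$ is locally free of rank $mr$), and that the proposed assignment is $\calO_X$-linear because both $\tau$ and each $\varphi(\alpha_q)$ are $\calO_X$-linear. It therefore suffices to verify that the map is an isomorphism locally on $X$, in particular over any open set $U \subset X$ over which $\calV$ trivializes.

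Next I would identify all the relevant objects concretely on such a trivializing open. Fixing $\calV|_U \cong \calO_U^r$, one has $(\Res_{E/k} \calV)(U) = \calV_E(U_E) \cong \calO(U_E)^r \cong (\calO(U) \otimes_k E)^r$ as $\calO(U)$-modules. Under these identifications, $\tau$ corresponds to the map $\calO(U)^r \hookrightarrow (\calO(U) \otimes_k E)^r$, $s \mapsto s \otimes 1$, and $\varphi(\alpha_q)$ corresponds to multiplication by $1 \otimes \alpha_q$ in each of the $r$ coordinates. Hence the map of the lemma becomes
\[ (\calO(U)^r)^m \;\longrightarrow\; (\calO(U) \otimes_k E)^r, \qquad (s_1, \ldots, s_m) \;\longmapsto\; \sum_{q=1}^m s_q \otimes \alpha_q. \]

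Finally I would observe that this is obtained by applying $(-)^{\oplus r}$ to the $\calO(U)$-linear map $\calO(U)^m \to \calO(U) \otimes_k E$ sending the $q$-th standard basis vector to $1 \otimes \alpha_q$. The latter is the base change along $k \to \calO(U)$ of the $k$-linear isomorphism $k^m \xrightarrow{\sim} E$ determined by the chosen basis $\{\alpha_1, \ldots, \alpha_m\}$, and is therefore itself an isomorphism. This completes the local verification, and hence the lemma.

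The lemma is essentially a restatement of the fact that a choice of $k$-basis of $E$ yields a $k$-linear isomorphism $k^m \xrightarrow{\sim} E$, so there is no serious obstacle. The only point requiring care is keeping the two module structures on $\Res_{E/k} \calV$ straight: it carries both an $\calO_X$-action (used to speak of local trivializations) and an $E$-action (via $\varphi$), and the identifications above must be made with respect to the $\calO_X$-action while $\varphi(\alpha_q)$ interacts with the $E$-action.
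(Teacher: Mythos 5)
Your proof is correct and takes essentially the same approach as the paper, which simply asserts that the map "is well-defined and is readily checked to be an isomorphism on fibers"; you have carried out that local/fiberwise verification in full detail, identifying the map on a trivializing open $U$ with the base change along $k \to \calO(U)$ of the isomorphism $k^m \xrightarrow{\sim} E$ given by the chosen basis.
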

\begin{proof}
	This map is well-defined and is readily checked to be an isomorphism on fibers.
\end{proof}
We assume that the section $(\sigma_1, \dots, \sigma_m) \in \calV(X)^m$ corresponding to $\sigma$ via the isomorphism in Lemma~\ref{Isomor} has a 0-dimensional vanishing scheme that is \'etale over $k$. Let $P \in X$ be one such zero having residue field $k(P)$ over $k$. Let $j_1, \dots, j_m\colon E \to \ksep$ be the geometric points of $E$ over $k$. Each $j_i$ induces a map $j_i\colon  (\Res_{E/k}\calV)(X) \to \calV_\ksep(X_\ksep)$, and \mbox{the map $j_i \circ \tau \colon \calV(X) \to \calV_\ksep(X_\ksep)$} does not depend on $i$. From the proof of Proposition \ref{disclocind}, the condition on $(\sigma_1, \dots, \sigma_m)$ having isolated simple zeros is equivalent to $(j_1\circ \sigma,\ldots,j_m\circ \sigma)\in \calV_{\ksep}(X)^m$ having isolated simple zeroes.

\subsection{Local indices of the section of $\Res_{E/k} \calV$}
 The following result gives an explicit formula for the local index of $\sigma_{\Res} \in (\Res_{E/k} \calV)(X) $; the answer is related to the Jacobian determinant at $P$ of the section $(j_1(\sigma), \dots, j_m(\sigma))$ of $\calV_\ksep^m$ by an explicit factor.

\begin{pr}\label{disclocind}
	Let $J(\sigma) \in \GL_{mr}(\ksep)$ be the Jacobian matrix for the map $\bbb{A}^{mr} \to \bbb{A}^{mr}$ induced by $(j_1(\sigma), \dots, j_m(\sigma)) \in \calV_\ksep^m(X_\ksep)$ with respect to a trivialization of $\calV^m$ and local Nisnevich coordinates in an open neighborhood of $P$ that are compatible with the relative orientation over $k$. Then
	\[ \ind_P (\sigma_{\Res}) = \Tr_{k(P)/k}\big(\langle (\det A)^{-r} \cdot \det J(\sigma) \rangle \big)  \in \GW(k),\]
where $A$ is the $n \times n$ matrix whose row-$p$, column-$q$ entry is $j_p(\alpha_q)$.
\end{pr}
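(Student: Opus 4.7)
The plan is to transport $\sigma_{\Res}$ across the $k$-isomorphism $\calV^m\xrightarrow{\sim}\Res_{E/k}\calV$ from Lemma \ref{Isomor} and then invoke Proposition \ref{CubicSurfacetrJprop}. Under that isomorphism the section $\sigma_{\Res}$ corresponds to the tuple $(\sigma_1,\ldots,\sigma_m)\in\calV(X)^m$ determined by $\sigma_{\Res}=\sum_q \varphi(\alpha_q)\tau(\sigma_q)$, and the hypothesis that the vanishing scheme is \'etale over $k$ says $P$ is a simple zero of this tuple with separable residue field. Choosing local Nisnevich coordinates around $P$ and a trivialization of $\calV$ near $P$ compatible with the relative orientation, Proposition \ref{CubicSurfacetrJprop} gives
\[
\ind_P\sigma_{\Res}=\Tr_{k(P)/k}\langle J'\rangle,
\]
where $J'\in k(P)^*$ is the determinant of the Jacobian of $(\sigma_1,\ldots,\sigma_m)\colon\bbb{A}^{mr}\to\bbb{A}^{mr}$ at $P$.

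The next step is to rewrite $J'$ in terms of $J(\sigma)$. Over $\ksep$, the isomorphism $E\otimes_k\ksep\xrightarrow{\sim}\ksep^m$ given by $e\mapsto(j_1(e),\ldots,j_m(e))$ identifies $(\Res_{E/k}\calV)_{\ksep}$ with $\calV_{\ksep}^{m}$ in a second way, and a direct unwinding of the definitions of $\tau$ and $\varphi$ yields $j_p(\sigma)=\sum_q j_p(\alpha_q)\,\sigma_q$ for each $p$. Interpreted as maps $\bbb{A}^{mr}_{\ksep}\to\bbb{A}^{mr}_{\ksep}$ in the chosen coordinates, this says
\[
(j_1\sigma,\ldots,j_m\sigma)=(A\otimes I_r)\cdot(\sigma_1,\ldots,\sigma_m),
\]
where $A\otimes I_r$ has $(p,q)$-block $j_p(\alpha_q)\,I_r$. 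Taking Jacobians at a geometric point over $P$ and applying the Kronecker-product identity $\det(A\otimes I_r)=(\det A)^r$ gives $\det J(\sigma)=(\det A)^r\cdot J'$, so $J'=(\det A)^{-r}\det J(\sigma)$. The right-hand side is a priori in $\ksep^*$, but the equality forces it into $k(P)^*$; Galois-theoretically this is because $\Gal(\ksep/k)$ permutes the $j_p$'s and acts on $(\det A)^r$ dually to its action on $\det J(\sigma)$. Substituting into the previous display produces the stated formula.

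The main obstacle I anticipate is not the algebra, which reduces essentially to one Kronecker-product determinant identity, but rather the orientation bookkeeping: one must check that the $k$-isomorphism of Lemma \ref{Isomor} transports the relative orientation of $\Res_{E/k}\calV$ to the natural relative orientation on $\calV^m$ induced from that of $\calV$ and $X$, and that base-changing to $\ksep$ does not insert a hidden sign beyond the factor $(\det A)^r$. Once these compatibilities are pinned down, the proof is exactly as sketched above, with Proposition \ref{CubicSurfacetrJprop} doing the work on the $k$-rational tuple $(\sigma_1,\ldots,\sigma_m)$ and the Kronecker-product identity converting between the two trivializations over $\ksep$.
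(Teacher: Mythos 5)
Your proposal follows essentially the same route as the paper's proof: transport $\sigma_{\Res}$ to the tuple $(\sigma_1,\ldots,\sigma_m)$ via Lemma~\ref{Isomor}, apply Proposition~\ref{CubicSurfacetrJprop}, and relate the two Jacobian determinants through the block matrix $A\otimes I_r$ and the chain rule. The Kronecker-product determinant identity you invoke is exactly what the paper uses implicitly when it observes that the change-of-basis matrix is an $mr\times mr$ block matrix with $(i,j)$-block $j_i(\alpha_j)I_r$, so the two arguments coincide.
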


\begin{rmk}
The determinant $\det A$ may not be defined over $k$, but the product $(\det A)^{-r} \cdot \det J(\sigma)$ is. Also, $(\det A)^2 \in k^\times/k^{\times 2}$ is the discriminant of the minimal polynomial of a generator of $E/k$; moreover, $(\det A)^2$ is used to define the relative discriminant in the case $k$ is a number field and $E$ is a field extension.
\end{rmk}

\begin{proof}[Proof of Proposition~\ref{disclocind}]
Let $(\sigma_1,\ldots,\sigma_m) \in \calV(X)^m$ be the section corresponding to $\sigma_{\Res} \in (\Res_{E/k}\calV)(X)$ under the isomorphism $\calV^m \to \Res_{E/k}\calV$ of Lemma~\ref{Isomor}. So by definition $\sigma_{\Res} = \sum_q \varphi(\alpha_{q})\tau(\sigma_q)$ and $\sigma = \sum_q \alpha_{q}\sigma_q$, where we regard the $\sigma_i$ as sections of $\calV_E$ under the inclusion $k\hookrightarrow E$. Therefore,
\begin{align*}
    \begin{pmatrix}
    j_1(\sigma) \\ \vdots \\ j_m(\sigma)
    \end{pmatrix}
    &=
    \begin{pmatrix}
    \sum_q j_1(\alpha_{q})\sigma_q \\ \vdots \\ \sum_q j_m(\alpha_{q})\sigma_q)
    \end{pmatrix}
    =
    \begin{pmatrix}
    j_1(\alpha_1) & \cdots & j_1(\alpha_m)\\
    \vdots & \ddots & \vdots \\
    j_m(\alpha_1) & \cdots & j_m(\alpha_m)
    \end{pmatrix}
    \begin{pmatrix}
    \sigma_1 \\ \vdots \\ \sigma_m
    \end{pmatrix}
    \end{align*}
    showing
    \begin{align}
    \label{relatetwo}
            \begin{pmatrix}
    \sigma_1 \\ \vdots \\ \sigma_m
    \end{pmatrix}&=
         \begin{pmatrix}
    j_1(\alpha_1) & \cdots & j_1(\alpha_m)\\
    \vdots & \ddots & \vdots \\
    j_m(\alpha_1) & \cdots & j_m(\alpha_m)
    \end{pmatrix}^{-1}
     \begin{pmatrix}
    j_1(\sigma) \\ \vdots \\ j_m(\sigma)
    \end{pmatrix}
\end{align}
The matrix in \eqref{relatetwo} is actually an $mr\times mr$ matrix with blocks of size $r\times r$. The $ij$ block is the $r\times r$ identity matrix times $j_{i}(\alpha_j)$.

By the computation of the local index in \cite[Proposition 32]{CubicSurface} using the Jacobian, we have that $\ind_P(\sigma_{\Res})$ is the $\Tr_{k(P)/k}\big(\langle \det J(\sigma_1,\ldots,\sigma_m)\rangle \big)$, where $J(\sigma_1,\ldots,\sigma_m)$ is the Jacobian determinant of $(\sigma_1,\ldots,\sigma_m)\colon \bbb{A}^{mr}\to \bbb{A}^{mr}$. By the chain rule and \eqref{relatetwo}, we find that
\begin{align*}
\ind_P(\sigma_{\Res}) & = \Tr_{k(P)/k}\big(\langle \det J(\sigma_1,\ldots,\sigma_m) \rangle \big) = \Tr_{k(P)/k}\big(\langle \det A^{-r} \cdot \det J(\sigma) \rangle)  \\
& = \Tr_{k(P)/k}\big(\langle (\det A)^{-r} \cdot \det J(\sigma) \rangle) \in \GW(k). \qedhere
\end{align*}	
\end{proof}

\subsection{Acknowledgements}
We warmly thank Leonardo Constantin Mihalcea for suggesting an arithmetic count of the lines meeting four lines in space after a talk by the second named author on \cite{CubicSurface}.

Kirsten Wickelgren was partially supported by National Science Foundation Awards DMS-1552730 and DMS-2001890. She also wishes to thank Universit\"at Regensburg and the Newton Institute for hospitality while writing this paper. 

The authors of the appendix would like to thank the organizers of the Arizona Winter School for the opportunity to work on this project, and Kirsten Wickelgren for her mentorship.
}
\bibliographystyle{amsalpha}

%
\bibliography{FourLines}

\end{document}